\documentclass[final]{amsart}

\usepackage{graphicx}
\usepackage{physics}
\usepackage{xfrac}
\usepackage{mathtools}
\usepackage{amssymb}
\usepackage{microtype}
\usepackage[hidelinks]{hyperref}
\usepackage[british]{babel}

\usepackage[natbib=true,style=oxyear,dashed=true]{biblatex}
\bibliography{references.bib}

\newcommand{\RR}{{\mathbb{R}}}
\newcommand{\NN}{{\mathbb{N}}}
\newcommand{\ZZ}{{\mathbb{Z}}}
\newcommand{\CC}{{\mathbb{C}}}
\newcommand{\DD}{{\mathbb{D}}}
\newcommand{\PP}{{\mathbb{P}}}

\newcommand{\opT}{{\mathcal{T}}}
\newcommand{\opA}{{\mathcal{A}}}
\newcommand{\opB}{{\mathcal{B}}}
\newcommand{\opC}{{\mathcal{C}}}
\newcommand{\opE}{{\mathcal{E}}}
\newcommand{\ope}{{\varepsilon}}
\newcommand{\opD}{{\mathcal{D}}}
\newcommand{\opP}{{\mathcal{P}}}
\newcommand{\opL}{{\mathcal{L}}}

\newcommand{\odeE}{\mathcal{E}_{11}}
\newcommand{\odeA}{\tilde{A}}
\newcommand{\odeB}{\tilde{B}}
\newcommand{\odeC}{\tilde{C}}
\newcommand{\odeD}{\tilde{D}}

\DeclareMathOperator{\spn}{span}
\newcommand{\inp}[2]{{\left\langle#1{,}\,#2\right\rangle}}

\theoremstyle{plain}
\newtheorem{theorem}{Theorem}[section]
\newtheorem{lemma}[theorem]{Lemma}
\newtheorem{proposition}[theorem]{Proposition}
\newtheorem{corollary}[theorem]{Corollary}

\theoremstyle{definition}
\newtheorem{definition}{Definition}[section]
\newtheorem{assumption}{Assumption}[section]
\newtheorem{example}{Example}[section]

\theoremstyle{remark}
\newtheorem*{overview}{Overview}
\newtheorem*{acknowledgements}{Acknowledgements}

\title[Computing and Optimizing The $H^2$-norm of DDAEs]{Computing and
  optimizing the $H^2$-norm of delay differential algebraic systems}

\date{11th March 2026 (preprint version)}

\thanks{This work was supported by KU~Leuven project C14/22/092 and by
  FWO-Flanders under grant number G.092.721N}

\author{Evert Provoost}
\author{Wim Michiels}

\email{evert.provoost@kuleuven.be{\rm ,} wim.michiels@kuleuven.be}

\address{KU~Leuven, Department of Computer Science, NUMA
  Research Unit, B-3001 Leuven, Belgium}

\keywords{Delay differential algebraic equations, $H^2$-norm, Lanczos tau
  method, robust control, and optimization}

\subjclass[2020]{65L03, 34K06, 34K40, and 34K35}

\begin{document}

\begin{abstract}
	We present a Lanczos tau method for the approximation and optimization of the
	$H^2$-norm of time-delay systems described by semi-explicit delay differential
	algebraic equations. The soundness of this approach is proven under the
	assumption of a finite strong $H^2$-norm. Furthermore, we prove convergence if
	the rational approximation of the exponential underlying the discretization is
	well-behaved and the discretization is stability preserving. Numerical results
	suggest that, for multiple delays, the method converges at cubic rate in the
	discretization degree for systems of retarded type and linearly for those of
	neutral type. In the single delay case, we note geometric convergence of the
	$H^2$-norm for systems of both retarded and neutral type when a symmetric
	basis is chosen.

	Explicit formulas are derived for the gradient of the approximation with
	respect to system parameters and delays. These allow us to compute the
	entire gradient using only about double the computational time of
	approximating the $H^2$-norm alone. We illustrate how these can be used to
	synthesize robust feedback controllers and stable approximate models.

	The article is concluded by a discussion of how the presented results extend
	and improve for approximations based on splines. We note acceleration of the
	convergence rate by about two orders for such a choice. Finally, we prove that
	a Lanczos tau method using a spline based on Legendre orthogonal polynomials
	preserves stability and guarantees convergence of the $H^2$-norm.
\end{abstract}

\maketitle

\section{Introduction}\label{sec:intro}

A useful measure in the description of robustness, and hence an important
performance criterion in robust control, is the $H^2$-norm,
\[
	\norm{H}_{H^2} = \sup_{0 < \alpha < \infty} \pqty{\frac{1}{2\pi} \int_{-\infty}^\infty \norm{H(\alpha + i\omega)}_F^2 \dd{\omega}}^{\frac{1}{2}},
\]
where $H : \CC \to \CC^{p \times q}$ is the system's transfer function, $i$ is
the imaginary unit, and $\norm{A}_F = \sqrt{\tr(AA^*)}$ is the Frobenius norm.
This norm is finite and reduces to the integral along the imaginary axis, i.e.\
$\alpha = 0$, when the system's zero solution is exponentially stable and the
system exhibits no feedthrough, that is
$\lim_{\omega \to \infty} H(i\omega) = 0$. Particularly for the synthesis of
controllers, this measure is computationally more practical than the equally
common $H^\infty$-norm, as its square is differentiable with respect to the
system's parameters under mild conditions, allowing for far more tractable
optimization.

It is well known that one can compute this norm for an ordinary delay-free
system by solving a Lyapunov equation \citep[Lemma~4.6]{Zhou1995}.
Generalizations to time-delay systems, which are systems whose dynamics not only
depend on the current state but also the state in the past, have previously been
proposed. In the retarded case, where the highest derivative is only ever
evaluated in the present, \citet{Vanbiervliet2011} propose to first approximate
the system by one without delays, e.g.\ by using the pseudospectral collocation
approximation of \citet{Breda2005}, and then use the exact $H^2$-norm of this
system as an approximation of the original. Alternatively, one can solve the
so-called delay Lyapunov equation, which additionally admits neutral systems
where the highest derivative can now also appear delayed, as presented by
\citet{Jarlebring2011}. Both approaches also allow one to compute the gradient
and thus give methods for $H^2$-optimal synthesis.

Whilst $H^2$-optimal model order reduction can be rephrased as a synthesis
problem, specialized methods have been developed where the targeted
approximation is a delay differential algebraic equation. This is the more
general class of systems we will also use, however, to the best of our knowledge
and unlike the approach we shall present, these have so far limited the types of
systems under consideration. \Citet{PontesDuff2018} extend the celebrated
iterative rational Krylov algorithm (IRKA) of \citet{Gugercin2008} to systems
with input and output delays. Data-driven approaches, using time-delay
extensions of the Loewner framework, also exist, where, unlike the previous and
our approach, only samples of the transfer function are needed instead of the
entire state space representation. TF-IRKA of \citet{Beattie2012}, the
data-driven analogue of IRKA, is extended to systems whose dynamics only depend
on the state at some fixed delay (and thus not the current state) in
\citet{PontesDuff2015}. Such extensions of the Loewner framework can also be
used directly to find reduced order models of systems with multiple delays, be
it not necessarily $H^2$-optimal, as in \citet{Schulze2018}.

In this work, we extend the approach taken by \citet{Vanbiervliet2011} to
approximate the $H^2$-norm, and the gradient of its square, for systems
described by semi-explicit delay differential algebraic equations (DDAEs) with
an arbitrary number of discrete state delays. These are particularly suitable
for modelling interconnected systems and networks, where delays are often
inherently present and algebraic equations simplify modelling. Moreover, the
introduction of algebraic equations greatly eases the design of fixed order
controllers, and the synthesis of stable approximate models, as these can now be
readily included without needing to eliminate variables, leaving the parameters
exposed for direct optimization. As we will see, DDAEs also allow---for better
or worse---much richer system types than their apparent simplicity might
suggest, including neutral type systems.

For our extension, we opt to use the Lanczos tau approach of
\citet{ItoTeglas1986} to produce an approximation. This discretization has
recently regained interest; it is extensively used in the work of
\citet{Scholl2024a,Scholl2024b} and showed improved convergence rates for the
$H^2$-norm of time-delay systems of retarded type in
\citet{ProvoostMichiels2024,ProvoostMichiels2025}. Whilst mathematically
equivalent to pseudospectral collocation
\citep[Theorem~4.1]{ProvoostMichiels2024}, the operator formulation it admits
greatly simplifies our argumentation and more naturally suggest the use of
symmetric basis functions, yielding faster convergence in the single-delay case,
as will be discussed in subsection~\ref{ssec:conv}.

In what follows, we consider the system
\begin{equation}
	\label{eq:ddae}
	\begin{aligned}
		E\dot{\vb{x}}(t) & = \textstyle \sum^m_{k=0} A_k \vb{x}(t - \tau_k) + B \vb{v}(t), \\
		\vb{z}(t)        & = C \vb{x}(t),
	\end{aligned}
\end{equation}
where the leading matrix $E$ is allowed to be singular,
$\tau_0 = 0 < \tau_1 < \cdots < \tau_m < \infty$ are the constant delays,
$\vb{x}(t) \in \CC^n$ is the state variable, $\vb{v}(t) \in \CC^{p}$ is the
input, and $\vb{z}(t) \in \CC^q$ is the output at time~$t$. The system's
transfer function is
\begin{equation}
	\label{eq:tf}
	\textstyle
	G(s) = C\pqty{sE - \sum_{k=0}^m A_k e^{-\tau_k s}}^{-1}B.
\end{equation}
Finally, we will assume this system to have differentiation index one, i.e.\ it
is semi-explicit, which is easily verified using to the following criterion due
to \citet{Fridman2002}.
\begin{proposition}\label{prop:idx1}
	Let $\tilde V$ and $\tilde U$ be matrices with orthogonal columns such that
	$\spn \tilde V = \ker E$ and $\spn \tilde U^* = \ker E^*$, then
	$\tilde V A_0 \tilde U$ is non-singular if, and only if, the
	system~\eqref{eq:ddae} has at most differentiation index one.
\end{proposition}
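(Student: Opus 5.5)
The plan is to reduce the DDAE~\eqref{eq:ddae} to semi-explicit form by an orthogonal change of coordinates, and then read off the index from whether the algebraic part can be solved for the algebraic variables. First, a remark on conventions. As written, $\tilde V A_0 \tilde U$ only makes sense dimensionally when the roles of the two matrices are read as in \citet{Fridman2002}: one projects the equations onto the left null space of $E$ and restricts $A_0$ to its right null space. I will therefore prove the statement in the form ``$U_2^* A_0 V_2$ is non-singular iff the index is at most one'', where the columns of $V_2$ form an orthonormal basis of $\ker E$ and those of $U_2$ an orthonormal basis of $\ker E^*$. This is exactly what the stated condition means up to this labelling.

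Next I bring the system to block form. Complete $V_2$ and $U_2$ to unitary matrices $V=[V_1\;V_2]$ and $U=[U_1\;U_2]$. A singular value decomposition of $E$ then gives
\[
U^*EV=\begin{pmatrix}\Sigma&0\\0&0\end{pmatrix}
\]
with $\Sigma$ non-singular. Substitute $\vb x = V(\vb x_1;\vb x_2)$ and multiply the first equation of~\eqref{eq:ddae} on the left by $U^*$. Writing $A_k^{ij}=U_i^*A_kV_j$, this yields
\begin{align*}
\Sigma\dot{\vb x}_1(t)&=\textstyle\sum_k A_k^{11}\vb x_1(t-\tau_k)+A_k^{12}\vb x_2(t-\tau_k)+U_1^*B\vb v(t),\\
0&=\textstyle\sum_k A_k^{21}\vb x_1(t-\tau_k)+A_k^{22}\vb x_2(t-\tau_k)+U_2^*B\vb v(t).
\end{align*}
The transformation is invertible and time-independent, so it preserves the differentiation index. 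The key block is $A_0^{22}=U_2^*A_0V_2$.

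For sufficiency, assume $A_0^{22}$ is invertible. Then the algebraic equation can be solved for $\vb x_2(t)$ as a function of $\vb x_1(t)$, the delayed values of $\vb x_1$ and $\vb x_2$, and $\vb v(t)$. Substituting this into the differential equation produces a neutral/retarded functional differential equation for $\vb x_1$, coupled with a difference equation for $\vb x_2$. No differentiation was needed to obtain this explicit description, so one differentiation of the algebraic constraint suffices to obtain $\dot{\vb x}_2$ in terms of the state history. Hence the index is at most one.

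For necessity, suppose $A_0^{22}$ is singular, and take $w\neq0$ with $w^*A_0^{22}=0$. Projecting the algebraic equation on $w$ gives a constraint that does not involve $\vb x_2(t)$ in the present. One differentiation therefore does not determine $\dot{\vb x}_2(t)$ in the direction corresponding to $\ker A_0^{22}$: the derived constraint involves only $\dot{\vb x}_1(t)$, delayed terms and $\dot{\vb v}$. A second differentiation is then required, so the index exceeds one. Equivalently, one can argue through the transfer function~\eqref{eq:tf}: for large $|s|$ with $\Re s$ large the delayed terms vanish, and the pencil $sE-A_0$ is regular of index one iff $A_0^{22}$ is invertible. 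This is the classical Weierstrass/index criterion for delay-free DAEs.

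The main obstacle is the necessity direction. It requires fixing precisely which definition of differentiation index for DDAEs is used, because delayed algebraic variables could in principle compensate. I would adopt the definition of \citet{Fridman2002}, namely the number of differentiations needed to express $\dot{\vb x}(t)$ in terms of $\vb x$ on $[t-\tau_m,t]$ and the input. Under that definition I would argue that the instantaneous part is governed solely by the delay-free pencil $(E,A_0)$, which reduces necessity to the known DAE result.
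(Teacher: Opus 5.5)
The paper gives no proof of this proposition. It quotes it from \citet{Fridman2002} and uses it, in effect, as the definition of ``index one''/semi-explicit. Your argument is therefore a genuinely different, self-contained route, and it is correct for the notion of index you adopt.

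You read the condition as $U_2^*A_0V_2$, i.e.\ $\tilde U A_0 \tilde V$, and that is the right reading: the printed $\tilde V A_0 \tilde U$ does not type-check, and the paper later normalizes $\bar U A_0 \bar V = -I_\nu$. Your unitary reduction is the paper's standard form~\eqref{eq:stand-form} before $\Sigma$ and $A_0^{22}$ are scaled to identities. The substance of your proof is this: with the history treated as known forcing, the instantaneous problem at time $t$ is the delay-free DAE $E\dot{\vb{x}} = A_0\vb{x} + f(t)$. The classical criterion therefore carries over: the pencil $(E,A_0)$ is regular of index at most one iff $A_0^{22}$ is non-singular.

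What your route buys over the citation is that it pins down which notion of index the equivalence is about, something the paper never defines. The ``only if'' direction holds for a causal, differentiation-only index. It fails if equations may first be shifted forward in time. For example, $E=\operatorname{diag}(1,0)$, $A_0=\vb{0}$, $A_1=\operatorname{diag}(0,1)$, $B=\vb{0}$ gives $\dot x_1(t)=0$ and $0=x_2(t-\tau_1)$. After a shift this is index one, although $A_0^{22}=0$.

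Two small tightenings:
\begin{itemize}
\item In the necessity step, the ``therefore'' needs a precise justification. After one differentiation the algebraic rows read $A_0^{22}\dot{\vb{x}}_2(t)=g$ with $g$ known. Differentiating the first block row only introduces the new unknown $\ddot{\vb{x}}_1(t)$. Hence the component of $\dot{\vb{x}}_2(t)$ in the right kernel of $A_0^{22}$ stays undetermined.
\item The transfer-function remark only recovers the index of the pencil $(E,A_0)$. It restates the same reduction rather than giving an independent proof.
\end{itemize}
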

An extensive treatise on such systems, including ones of higher index, is given
by \citet{Unger2020}.

As alluded to before, even though they are not explicitly present, neutral terms
can be easily added using slack variables. For example,
\[
	\begin{aligned}
		\dot{x}(t) & = \dot{x}(t - 1) + v(t), \\
		z(t)       & = x(t),
	\end{aligned}
	\qq{and}
	\begin{aligned}
		\spmqty{1 & 0                          \\ 1 & 0} \dot{\vb{x}}(t) &= \spmqty{0 & 0 \\ 0 & 1}\vb{x}(t) + \spmqty{0 & 1 \\ 0 & 0}\vb{x}(t - 1) + \spmqty{1 \\ 0}v(t), \\
		z(t)      & = \spmqty{1 & 0}\vb{x}(t),
	\end{aligned}
\]
are equivalent from an input-output and stability perspective. In a similar
fashion, input and output delays can also be added using such slack variables. A
number of additional examples of such reformulations can be found in
appendix~\ref{apx:ddae-forms}.

More treacherously, feedthrough can also hide in model~\eqref{eq:ddae}, as it
can, once again, be achieved using slack variables. As the absence of
feedthrough is a requirement for a finite $H^2$-norm, we will thus need
additional tools to check for its presence. Worse yet, this feedthrough can now
also appear after infinitesimal delay perturbations. The system
\begin{align*}
	\spmqty{0 & 0           & 0 & 0                 \\0&0&0&0\\0&0&0&0\\0&0&0&0}\dot{\vb{x}}(t) & = \begin{aligned}[t]
		\spmqty{\imat{4}}\vb{x}(t)
		            & - \spmqty{0 & 0 & 1 & 0 \\0&0&0&0\\0&0&0&0\\0&0&0&0} \vb{x}(t-\tau_1)
		- \spmqty{0 & 0           & 0 & 0     \\0&0&0&0\\0&0&0&1\\0&0&0&0} \vb{x}(t-\tau_2) \\
		            & - \spmqty{0 & 0 & 0 & 0 \\0&0&0&1\\0&0&0&0\\0&0&0&0} \vb{x}(t-\tau_3)
		- \spmqty{0                           \\0\\0\\1} v(t),\end{aligned}                          \\
	z(t)      & = \spmqty{1 & 1 & 0 & 0} \vb{x}(t),
\end{align*}
for instance, which could quite easily hide within a large model, is equivalent to
\[
	z(t) = v(t - (\tau_1 + \tau_2)) - v(t - \tau_3).
\]
If $\tau_1 + \tau_2 = \tau_3$ this is the zero output system, which has zero as
$H^2$-norm. The slightest perturbation, however, clearly introduces feedthrough,
resulting in an infinite $H^2$-norm.

\begin{overview}
	In subsection~\ref{ssec:ddaediscr} we extend the discretization based approach
	of \citet{Vanbiervliet2011} to the DDAE setting, dealing with the additional
	challenges of this more general model class as needed. In particular, we prove
	this method to be sound under the assumption of a finite strong $H^2$-norm. We
	continue in subsection~\ref{ssec:conv} by proving that, under mild
	assumptions, the method converges and illustrate typical convergence rates.
	The required preliminaries for these subsections are discussed in
	section~\ref{sec:prelim}. In section~\ref{sec:control} we turn to the
	synthesis of $H^2$-optimal fixed-order controllers and stable approximate
	models. We do so using gradient based optimization, for which we derive
	analytical expressions of the gradient of the square of the approximation with
	respect to the system matrices and delays in subsection~\ref{ssec:deriv}. We
	illustrate this approach with a number of numerical examples in
	subsection~\ref{ssec:examples}. In section~\ref{sec:spline} we sketch how the
	results of the preceding sections extend, and sometimes simplify, for a
	discretization based on splines. Finally, we summarize our findings and
	suggest future avenues in section~\ref{sec:concl}.
\end{overview}

\section{Preliminaries}\label{sec:prelim}
In the next section, we shall extend an existing method for the approximation of
the $H^2$-norm of a RDDE to DDAEs. In this section we review the original method
and some earlier results on the strong $H^2$-norm of DDAEs, a robustified
measure that explicitly considers small time-delay perturbations, which will
allow us to guarantee the soundness of our proposed extension.

\subsection{Finiteness of the strong $H^2$-norm of a DDAE}
As noted in the introduction, DDAEs can be used to model neutral systems. For
such systems, exponential stability is not necessarily preserved under
infinitesimal perturbations of the delays \citep[see
	e.g.][]{HaleVerduynLunel2002}. Equally, we know from the example in the
introduction that feedthroughs can appear under such perturbation. As both
instability and feedthrough yield an infinite $H^2$-norm, its finiteness is thus
not guaranteed under infinitesimal perturbations. This prompted
\citet{Gomez2022} to introduce a strong notion of the $H^2$-norm that takes such
perturbations into account.

Let $\vb*{\tau} = (\tau_1, \dots, \tau_m) \in \RR_+^m$ and
$B_\delta(\vb*{\tau}) = \Bqty{\vb*{\theta} \in \RR_+^m : \norm{\vb*{\tau} -
			\vb*{\theta}} < \delta}$ the open ball around $\vb*{\tau}$ of radius
$\delta$. We can then define the following norm.
\begin{definition}
	The \emph{strong $H^2$-norm} of the system~\eqref{eq:ddae} is given by
	\[
		\norm{G}^{\mathrm{s}}_{H^2} = \limsup_{\delta \to 0^+}
		\Bqty{\norm{G(\,\cdot\,; \vb*{\theta})}_{H^2} : \vb*{\theta} \in
			B_\delta(\vb*{\tau})}.
	\]
\end{definition}

Previous work allows us to characterize the finiteness of this norm using two
components: strong exponential stability and an algebraic test to detect
feedthrough under infinitesimal perturbations. In the remainder of this
subsection we review how to establish both.

Remember that a system is exponentially stable if and only if the spectral
abscissa $\alpha < 0$, where $\alpha$ is the supremum of the real parts of the
characteristic roots of the system. Clearly, $\alpha$ is a function of
$\vb*{\tau}$, we can thus, analogously to before, define the following due to
\citet{HaleVerduynLunel2002}.
\begin{definition}
	The system~\eqref{eq:ddae} is \emph{strongly exponentially stable} if, and
	only if, there exists a finite $\delta > 0$ such that
	\[
		\max_{\vb*{\theta} \in B_\delta(\vb*{\tau})} \alpha(\vb*{\theta}) < 0.
	\]
\end{definition}

Numerically, one can assess strong stability using the method described by
\citet[section~4.3]{Michiels2011}. We can also give an analytical
characterization using the following standard form.

Let $E$ be of rank $n - \nu$. Choose matrices $\bar V$ and $\bar U$ with
orthogonal columns, such that $\spn \bar V = \ker E$,
$\spn \bar U^* = \ker E^*$, $\bar U A_0 \bar V = -I_\nu$, and
$\bar U^\perp E \bar V^\perp = I_{n - \nu}$, where the columns of $A^\perp$ span
the orthogonal complement of $\spn A$. Note that we know these to exist from
Proposition~\ref{prop:idx1}. Applying the transform
$\vb{x}(t) = \bar V^\perp \vb{x}_1(t) + \bar V \vb{x}_2(t)$ and left multiplying
by $\spmqty{\bar U^{\perp} \\ \bar U}$ gives us
\begin{equation}
	\label{eq:stand-form}
	\begin{aligned}
		 & \left\{ \begin{aligned}
			           \dot{\vb{x}}_1(t) & = \textstyle\sum^m_{k=0} A_{k,11} \vb{x}_1(t - \tau_k) + \sum^m_{k=0} A_{k,12} \vb{x}_2(t - \tau_k) + B_1 \vb{v}(t), \\
			           \vb{x}_2(t)       & = \textstyle\sum^m_{k=0} A_{k,21} \vb{x}_1(t - \tau_k) + \sum^m_{k=1} A_{k,22} \vb{x}_2(t - \tau_k) + B_2 \vb{v}(t),
		           \end{aligned}\right. \\
		 & \quad	\vb{z}(t)      = \opC_1 \vb{x}_1(t) + \opC_2 \vb{x}_2(t).
	\end{aligned}
\end{equation}
We now have the following as a direct consequence of Corollary~2.1 of
\citet{HaleVerduynLunel2002}.
\begin{theorem}\label{thm:strng-stab-charac}
	The system~\eqref{eq:stand-form}, and thus also~\eqref{eq:ddae}, is strongly
	exponentially stable if, and only if, the spectral abscissa is negative for
	the nominal delays, i.e.\ $\alpha(\vb*{\tau}) < 0$, and
	\[
		\max_{\vb*{\theta} \in {[0,2\pi]}^m} \rho\pqty{\textstyle\sum^m_{k=1} A_{k,22} \, e^{i\theta_k}} < 1,
	\]
	where $\rho(A)$ is the spectral radius of $A$.
\end{theorem}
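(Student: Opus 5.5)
The plan is to reduce the system to a neutral functional differential equation to which Corollary~2.1 of \citet{HaleVerduynLunel2002} applies, and then read off both conditions.

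\emph{Reduction.} Differentiate the algebraic part of~\eqref{eq:stand-form}, or equivalently use $\vb{x}_2$ together with $\vb{x}_1$ as the state of a difference--differential system. Setting $\vb{v}\equiv 0$, the second equation is a difference equation for $\vb{x}_2$,
\[
	\vb{x}_2(t) - \textstyle\sum_{k=1}^m A_{k,22}\vb{x}_2(t-\tau_k) = \sum_{k=0}^m A_{k,21}\vb{x}_1(t-\tau_k).
\]
Coupled with the first equation, this is a system of the form $\frac{d}{dt}D\vb{x}_t = L\vb{x}_t$ in the Hale--Verduyn Lunel framework, with difference operator
\[
	D\phi = \pmqty{\phi_1(0) \\ \phi_2(0) - \sum_{k\ge1} A_{k,22}\phi_2(-\tau_k)}.
\]
One can either use the coupled differential--difference form that Hale treats directly, or integrate the second row to put it in neutral form. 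The transformation $\vb{x} = \bar V^\perp\vb{x}_1 + \bar V\vb{x}_2$ together with the invertible left multiplication preserves the characteristic roots, since $\det$ of the transformed characteristic matrix differs by a nonzero constant. This justifies the ``and thus also~\eqref{eq:ddae}'' part.

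\emph{Essential spectrum.} The characteristic equation of $D$ is block triangular. Its first block is trivial, and its second block is $\det\big(I - \sum_{k\ge1} A_{k,22}e^{-\lambda\tau_k}\big)=0$. This is exactly the difference operator whose strong stability is characterized by the cited corollary: the difference equation is stable for all delays in a neighbourhood of $\vb*{\tau}$ (equivalently, for all rationally independent perturbations) if and only if $\max_{\vb*\theta}\rho\big(\sum_k A_{k,22}e^{i\theta_k}\big)<1$.

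\emph{Combining the two conditions.} For the strong exponential stability of the full neutral system, Corollary~2.1 gives the result once two facts are in hand.
\begin{itemize}
	\item[(i)] Stability for nearby delays requires stability of $D$ for nearby delays, which is the spectral radius condition.
	\item[(ii)] If $D$ is strongly stable, then $\alpha(\vb*\theta)$ is continuous at $\vb*\tau$. Hence $\alpha(\vb*\tau)<0$ propagates to a neighbourhood.
\end{itemize}
The converse direction is immediate: $\alpha(\vb*\tau)<0$ holds trivially, and the radius condition is forced by (i).

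The main obstacle is checking that the semi-explicit DDAE, rather than a genuine neutral equation, falls within the hypotheses of the corollary. In particular, the algebraic equation involves $A_{0,21}\vb{x}_1(t)$ undelayed but no undelayed $\vb{x}_2$ term, because $\bar U A_0\bar V=-I_\nu$ has been absorbed into the left-hand side. I would handle this by rewriting in terms of $\vb{y}_2 = \vb{x}_2 - \sum_k A_{k,21}\vb{x}_1(t-\tau_k)$, or simply by noting that Hale's theory covers such coupled systems with atomic difference operator $D$.
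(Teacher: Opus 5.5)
You follow the paper's route: the paper gives no argument beyond declaring the theorem a direct consequence of Corollary~2.1 of \citet{HaleVerduynLunel2002}. However, both steps you add to make that citation applicable fail as written.

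\emph{The reduction.} Differentiating the algebraic row and integrating $\vb{x}_2$ are not equivalent to keeping the differential--difference form. Either operation multiplies the characteristic function by $\lambda^{\nu}$. In Laplace terms, you replace the algebraic block row by $\lambda$ times itself (possibly plus a multiple of the first block row), or you replace $\vb{x}_2$ by $\lambda\vb{y}$. The resulting neutral equation therefore has characteristic function $\lambda^{\nu}\det\Delta(\lambda)$, where $\Delta$ is the characteristic matrix of~\eqref{eq:stand-form}. It thus has a root of multiplicity at least $\nu$ at the origin for every choice of delays and is never exponentially stable, so it cannot be the object to which the corollary is applied. Your substitution for $\vb{y}_2$ does not touch this issue. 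The assertion that Hale's theory covers coupled differential--difference systems directly is an additional claim you would have to justify or reference. The simple repair is to note that these spurious roots sit at $\lambda=0$ independently of the delays, and to argue on the quasi-polynomial $\det\Delta$ itself, as below.

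\emph{Claim~(ii).} This claim is false: strong stability of the difference operator does not make $\alpha$ continuous at $\vb*{\tau}$. Take $\dot x_1(t)=-x_1(t)$ and $x_2(t)=\tfrac12x_2(t-1)-\tfrac25x_2(t-2)$, so $\vb*{\tau}=(1,2)$.
\begin{itemize}
\item The radius condition holds with value $\tfrac12+\tfrac25=\tfrac{9}{10}$.
\item At the nominal delays, $\alpha(1,2)=\tfrac12\ln\tfrac25\approx-0.46$, because $e^{\lambda}$ solves $z^2-\tfrac12z+\tfrac25=0$, whose roots have modulus $\sqrt{2/5}$.
\item For rationally independent delays $(\theta_1,2)$, the supremum of the real parts of the roots of $1-\tfrac12e^{-\lambda\theta_1}+\tfrac25e^{-2\lambda}$ is the real solution $c$ of $\tfrac12e^{-c\theta_1}+\tfrac25e^{-2c}=1$. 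This tends to $c\approx-0.07$ as $\theta_1\to1$.
\end{itemize}
So $\alpha$ jumps upward under arbitrarily small perturbations.

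What you actually need, $\limsup_{\vb*{\theta}\to\vb*{\tau}}\alpha(\vb*{\theta})<0$, is still true but must be argued differently. Choose $\beta\in(\alpha(\vb*{\tau}),0)$ close enough to $0$ that the radius condition still holds on the closed polydisc of radius $e^{-\beta(\tau_m+\delta)}$. To get there, extend it first to the closed unit polydisc as in the proof of Lemma~\ref{lem:stronger-strn-stab-cond}, then use compactness. Then:
\begin{itemize}
\item $(I-\sum_{k\ge1}A_{k,22}e^{-\lambda\theta_k})^{-1}$ is uniformly bounded for $\operatorname{Re}\lambda\ge\beta$ and $\vb*{\theta}\in B_\delta(\vb*{\tau})$.
\item The Schur complement therefore confines all roots of $\det\Delta$ in that half-plane to a fixed compact set.
\item $\det\Delta$ converges uniformly on that set as $\vb*{\theta}\to\vb*{\tau}$, and there are no nominal roots there, so there are no roots there for $\vb*{\theta}$ close to $\vb*{\tau}$.
\end{itemize}
This is the same mechanism as in the proof of Theorem~\ref{thm:Legendre-single-delay-stable}.
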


Given strong stability, we still need to verify that no feedthrough exists, nor
can appear under infinitesimal perturbations. To this end, define the following
family of matrices.
\begin{definition}\label{def:matrix-powers}
	Let $\opP_{k_0\dots k_m}$ be the sum of all unique matrix products
	consisting of $k_0$~times $A_{0,22} = -I_\nu$, $k_1$~times $A_{1,22}$, \dots,
	and $k_m$~times $A_{m,22}$.
\end{definition}
For example, $\opP_{0,\dots,0} = I_\nu$, $\opP_{1,2,0,\dots,0} = -A_{1,22}^2$,
and
\[
	\opP_{0,1,2,0,\dots,0} = A_{1,22}A_{2,22}^2 + A_{2,22}A_{1,22}A_{2,22} + A_{2,22}^2A_{1,22}.
\]

These $\opP_{\vb{k}}$ appear naturally in light of results like
Theorem~\ref{thm:strng-stab-charac}. Indeed, from the multinomial theorem
directly follows the following property.
\begin{proposition}\label{prop:sum-power}
	For $r \in \NN$ we have
	\[\textstyle
		\pqty{\sum^m_{k=0} z_k A_{k,22}}^r = \sum_{\substack{\vb{k} \in \NN^m \\ \norm{\vb{k}}_1 = r}} \binom{r}{\vb{k}}\,\vb{z}^{\vb{k}}\,\binom{\norm{\vb{k}}_1}{\vb{k}}^{-1} \opP_{\vb{k}} = \sum_{\substack{\vb{k} \in \NN^m \\ \norm{\vb{k}}_1 = r}} \vb{z}^{\vb{k}} \opP_{\vb{k}},
	\]
	where $\vb{z}^{\vb{k}} = z_0^{k_0}z_1^{k_1}\cdots z_m^{k_m}$.
\end{proposition}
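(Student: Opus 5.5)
We prove the identity by expanding the power as a sum over words and grouping the words by how many times each letter occurs. Set $M_k = z_k A_{k,22}$ for $k = 0, \dots, m$. Although the index set is written $\NN^m$, the multi-indices must have entries $k_0, \dots, k_m$, matching Definition~\ref{def:matrix-powers}; we read $\vb{k}$ as an element of $\NN^{m+1}$ throughout. The scalars $z_k$ commute with all matrices, while the $A_{k,22}$ in general do not commute with each other. So we cannot use the commutative multinomial theorem directly, and we instead use its noncommutative form. Distributing the product of $r$ factors gives
\[
	\textstyle\pqty{\sum_{k=0}^m M_k}^r = \sum_{(j_1,\dots,j_r) \in \{0,\dots,m\}^r} M_{j_1} M_{j_2} \cdots M_{j_r},
\]
which is a sum over all $(m+1)^r$ words of length $r$ in the alphabet $\{0,\dots,m\}$.

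Next we group the words by their letter counts. To each word $(j_1,\dots,j_r)$ we associate the multi-index $\vb{k}$, where $k_\ell$ is the number of positions with $j_i = \ell$; then $\norm{\vb{k}}_1 = r$. Because the scalars commute, each term factors as
\[
	M_{j_1}\cdots M_{j_r} = \vb{z}^{\vb{k}}\, A_{j_1,22}\cdots A_{j_r,22}.
\]
For a fixed $\vb{k}$, the words with that count are in bijection with the distinct arrangements of the multiset containing $k_\ell$ copies of each letter $\ell$. The number of such arrangements is the multinomial coefficient $\binom{r}{\vb{k}}$.

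The one point requiring care is the phrase ``sum of all unique matrix products''. We interpret $\opP_{\vb{k}}$ as the sum over all distinct orderings, i.e.\ distinct words, of the formal product $A_{j_1,22}\cdots A_{j_r,22}$. This interpretation is consistent with the worked example for $\opP_{0,1,2,0,\dots,0}$, which has exactly $3 = \binom{3}{1,2}$ terms. Under it, the sum of $A_{j_1,22}\cdots A_{j_r,22}$ over all words with count $\vb{k}$ is exactly $\opP_{\vb{k}}$, with no extra multiplicity. Collecting terms therefore yields
\[
	\textstyle\pqty{\sum_{k=0}^m z_kA_{k,22}}^r = \sum_{\norm{\vb{k}}_1 = r} \vb{z}^{\vb{k}}\opP_{\vb{k}},
\]
which is the rightmost expression in the statement.

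The middle expression is written in the shape of the commutative multinomial theorem, with $\binom{r}{\vb{k}}$ counting words and $\opP_{\vb{k}}$ divided by its number of terms. Since $\norm{\vb{k}}_1 = r$, the factor $\binom{r}{\vb{k}}\binom{\norm{\vb{k}}_1}{\vb{k}}^{-1}$ equals $1$, so the middle and rightmost expressions coincide. We expect no real obstacle beyond fixing this interpretation of $\opP_{\vb{k}}$ and checking it against the given examples. For instance, $\opP_{1,2,0,\dots,0} = -A_{1,22}^2$ is consistent with $A_{0,22} = -I_\nu$ commuting with everything: the three words $0\,1\,1$, $1\,0\,1$ and $1\,1\,0$ all give the matrix product $-A_{1,22}^2$. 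Strictly, that sum has three terms and equals $-3A_{1,22}^2$, so the stated value treats coinciding matrix products as a single term. We therefore adopt the convention that ``unique'' refers to distinct words.
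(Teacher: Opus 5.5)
Your proof is correct and follows the paper's route. The paper only says the identity follows directly from the multinomial theorem, and your expansion over words of length $r$, grouped by letter counts, is the noncommutative form of that argument; the middle factor $\binom{r}{\vb{k}}\binom{\norm{\vb{k}}_1}{\vb{k}}^{-1}$ is indeed trivially $1$. Your readings of $\vb{k}$ as an element of $\NN^{m+1}$ and of $\opP_{\vb{k}}$ as a sum over distinct words are exactly the ones under which the identity holds, and you are right that this forces $\opP_{1,2,0,\dots,0} = -3A_{1,22}^2$ rather than the value $-A_{1,22}^2$ given in the paper's example.
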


Using these $\opP_{\vb{k}}$ we can now formulate the following theorem due to
\citet[Corollary~9]{Gomez2020} as sharpened by \citet[Theorem~13]{Mattenet2022}.
\begin{theorem}\label{thm:strng-h2-charac}
	The system~\eqref{eq:stand-form}, and thus also~\eqref{eq:ddae}, has a finite
	strong $H^2$-norm, i.e.\ $\norm{G}_{H^2}^{\mathrm{s}} < \infty$, if, and only
	if, it is strongly stable and $C_2 \opP_{\vb{k}} B_2 = \vb{0}$ for all
	$\vb{k} \in \ZZ_+^m$. Furthermore, it is sufficient to verify the latter
	condition for $\norm{\vb{k}}_1 < \nu$.
\end{theorem}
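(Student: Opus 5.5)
Since the statement is attributed to \citet[Corollary~9]{Gomez2020} as sharpened by \citet[Theorem~13]{Mattenet2022}, I would only reconstruct its structure, using Theorem~\ref{thm:strng-stab-charac} and Proposition~\ref{prop:sum-power}. The first step is to eliminate the algebraic variable in the standard form~\eqref{eq:stand-form}. Write $\Delta(s) = \sum_{k=1}^m A_{k,22} e^{-\tau_k s}$. Whenever $I_\nu - \Delta(s)$ is invertible, the transfer function becomes
\[
	G(s) = \bigl(C_1 + C_2 (I_\nu - \Delta(s))^{-1}\textstyle\sum_{k} A_{k,21}e^{-\tau_k s}\bigr)\,(sI - \cdots)^{-1}(\cdots) + C_2 (I_\nu - \Delta(s))^{-1} B_2 .
\]
The first term is strictly proper along vertical lines, so the only possible source of feedthrough is the neutral part
\[
	F(s;\vb*{\tau}) = C_2 (I_\nu - \Delta(s))^{-1} B_2 .
\]

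Next I would handle necessity of strong stability. If the system is not strongly stable, then for delays arbitrarily close to $\vb*{\tau}$ there are characteristic roots in the closed right half-plane. The $H^2$-norm, defined as a supremum over $\alpha>0$, is then infinite, unless these roots are cancelled in $G$. Excluding such cancellations is the subtle point I would take from the cited works. Assuming strong stability, Theorem~\ref{thm:strng-stab-charac} gives a uniform bound $\rho(\Delta) < 1$. A Neumann series then yields, uniformly for $\vb*{\theta}$ near $\vb*{\tau}$ and $\Re s \ge 0$,
\[
	F(s;\vb*{\theta}) = \sum_{r\ge0} C_2 \Delta(s)^r B_2 = \sum_{\vb{k}} C_2 \opP_{\vb{k}} B_2\, e^{-s\,\vb{k}\cdot\vb*{\theta}} ,
\]
where I use Proposition~\ref{prop:sum-power} with $k_0=0$. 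Convergence here is geometric after grouping into blocks of powers, which works because the spectral radius bound is uniform.

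For sufficiency, if all coefficients $C_2 \opP_{\vb{k}} B_2$ with $\vb{k}\neq 0$ vanish, then $F = C_2 B_2$ is constant. The $\vb{k}=0$ case must also give $C_2B_2=0$, which I read as included in the condition. Then $G$ is strictly proper with uniform decay like $1/|s|$. Combined with uniform exponential stability, this gives finite $H^2$-norms bounded uniformly in $\vb*{\theta}$. For necessity, if some $C_2\opP_{\vb{k}}B_2\neq0$, choose $\vb*{\theta}$ arbitrarily close to $\vb*{\tau}$ with rationally independent entries. The exponents $\vb{k}\cdot\vb*{\theta}$ are then distinct, so $F(i\omega;\vb*{\theta})$ is a nontrivial almost periodic function. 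Its mean square is $\sum\|C_2\opP_{\vb{k}}B_2\|_F^2>0$, which makes $\int \|G(i\omega)\|_F^2\,d\omega$ diverge.

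The main obstacle is the finite-check claim $\|\vb{k}\|_1<\nu$. I would attack it with a Cayley--Hamilton/Krylov argument in commuting indeterminates $z_1,\dots,z_m$. Set $\Delta(\vb{z}) = \sum z_k A_{k,22}$ and $W_r = \spn\{\text{coefficients of } \Delta(\vb{z})^j B_2,\ j\le r\}$, taking columns of coefficient matrices. By Proposition~\ref{prop:sum-power}, $W_r$ is exactly the span of the $\opP_{\vb{k}}B_2$ with $\|\vb{k}\|_1\le r$. The subspaces $W_r$ form an increasing chain in $\CC^\nu$, and each $W_{r+1}$ is determined by $W_r$. So the chain stabilises once $W_r=W_{r+1}$, which happens by $r=\nu-1$. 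Making the step ``determined by $W_r$'' rigorous is delicate, because the symmetrised products $\opP_{\vb{k}}$ are not individually Krylov vectors. I would instead work in the free (non-commutative) setting, where the span of all words in the $A_{k,22}$ applied to $B_2$ stabilises within $\nu$ steps. Then I would argue that $C_2$ annihilating all symmetrised sums is what the vanishing of the almost-periodic coefficients requires. This reconciliation is where I would defer to \citet{Mattenet2022}.
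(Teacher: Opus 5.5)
Your analytic reconstruction is a reasonable outline of what the paper simply imports from the cited works. That covers eliminating $\vb{x}_2$, isolating $F(s;\vb*{\theta}) = C_2(I_\nu-\Delta(s))^{-1}B_2$, and detecting a nonzero $C_2\opP_{\vb{k}}B_2$ as a Bohr--Fourier coefficient of $F(i\omega;\vb*{\theta})$ at rationally independent $\vb*{\theta}$.

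The genuine gap is the finite test $\norm{\vb{k}}_1<\nu$. Your chain argument needs ``$W_{r+1}$ is determined by $W_r$''. That is false, and so is the conclusion that the chain stops growing once $W_r=W_{r+1}$. Take $\nu=4$, $m=2$, $B_2=\vb{e}_1$, $A_{1,22}=\vb{e}_2\vb{e}_1^T+\vb{e}_3\vb{e}_2^T+\vb{e}_4\vb{e}_3^T$ and $A_{2,22}=\vb{e}_3\vb{e}_1^T-\vb{e}_4\vb{e}_2^T$. Every $\sum_k z_kA_{k,22}$ is strictly lower triangular, so the spectral radius condition holds trivially. Then $W_1=\spn\{\vb{e}_1,\vb{e}_2,\vb{e}_3\}$. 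At level two, $A_{1,22}^2\vb{e}_1=\vb{e}_3$, $A_{2,22}^2\vb{e}_1=\vb{0}$ and $(A_{1,22}A_{2,22}+A_{2,22}A_{1,22})\vb{e}_1=\vb{e}_4-\vb{e}_4=\vb{0}$, so $W_2=W_1$. Yet $A_{1,22}^3\vb{e}_1=\vb{e}_4$, so $W_3=\CC^4$. With $C_2=\vb{e}_4^T$, every condition with $\norm{\vb{k}}_1\le 2$ holds and the one at $\vb{k}=(3,0)$ fails, so stopping at a plateau gives a wrong test.

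The fallback to non-commutative words proves a finite test for a different, stronger hypothesis: $C_2 wB_2=\vb{0}$ for all words $w$. The symmetrised conditions do not imply it. Replacing $A_{1,22}$ above by $\vb{e}_2\vb{e}_1^T+\vb{e}_4\vb{e}_3^T$ gives $C_2\opP_{\vb{k}}B_2=\vb{0}$ for all $\vb{k}$, while $C_2A_{1,22}A_{2,22}B_2=1$. So there is nothing to ``reconcile''; that route cannot yield the statement.

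The missing idea is to evaluate the commuting variables rather than track subspaces, exactly as the paper does in the proof of Theorem~\ref{thm:no-feedthrough}. Fix $\vb{w}\in\CC^m$ and put $M=\sum_{k=1}^m w_kA_{k,22}$. By Cayley--Hamilton, each $M^r$ with $r\ge\nu$ is a linear combination of $I_\nu,M,\dots,M^{\nu-1}$. By Proposition~\ref{prop:sum-power}, $C_2M^rB_2=\sum_{\norm{\vb{k}}_1=r}\vb{w}^{\vb{k}}C_2\opP_{\vb{k}}B_2$. Hence vanishing for $\norm{\vb{k}}_1<\nu$ gives $C_2M^rB_2=\vb{0}$ for every $r$ and every $\vb{w}$. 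A homogeneous polynomial vanishing on all of $\CC^m$ has zero coefficients, which finishes the argument. Equivalently, $C_2\operatorname{adj}(I_\nu-M)B_2$ has degree at most $\nu-1$ in $\vb{w}$.

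A smaller point concerns the implication ``finite strong norm $\Rightarrow$ strong stability''. You cannot obtain it by excluding pole--zero cancellations, because they cannot be excluded: with $C=\vb{0}$ the strong norm is zero whatever the stability. With the transfer-function definition of the introduction that implication is simply false. It holds only under the convention, used in the paper (``both instability and feedthrough yield an infinite $H^2$-norm'') and in the cited works, that a system which is not exponentially stable has infinite $H^2$-norm.
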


\subsection{Computing the $H^2$-norm of a RDDE}\label{ssec:h2rdde}
Before turning to the algebraic case, let us first look at systems described by
purely retarded delay differential equations (RDDEs). We arrive at such a system
by putting $E = I_n$ in~\eqref{eq:ddae}, that is
\begin{equation}
	\label{eq:rdde}
	\begin{aligned}
		\dot{\vb{x}}(t) & = \textstyle \sum^m_{k=0} A_k \vb{x}(t - \tau_k) + B \vb{v}(t), \\
		\vb{z}(t)       & = C \vb{x}(t).
	\end{aligned}
\end{equation}

As noted earlier, we know how to practically compute the exact $H^2$-norm of a
delay-free system, i.e.\ with $A_k = \vb{0}$ if $\tau_k \neq 0$. The method
proposed by \citet{Vanbiervliet2011}, then, is rather straightforward:
approximate the $H^2$-norm of the delayed system by that of a delay-free
approximation. A recent overview of this approach using a Lanczos tau method,
which seems to give improved convergence in some cases compared to the
pseudospectral collocation in Chebyshev extrema originally used, was given by
\citet{ProvoostMichiels2024}, building on the work of \citet{ItoTeglas1986}. We
will detail their approach in the remainder of this section.

The effective state of a time-delay system at time $t$---in the sense of the
minimal required information to uniquely define its dynamics---is given by the
function $\xi_t : [-\tau_m, 0] \to \CC^n$, with
$\xi_t(\theta) = \vb{x}(t + \theta)$ \citep{Bernier1978}. Using this
history function $\xi_t$, we can reformulate~\eqref{eq:rdde} in the following
form, essentially rephrasing the DDE as an advection PDE,
\begin{align*}
	\pmqty{\ope_0                \\ I} \dot{\xi}_t &= \pmqty{\sum^m_{k=0} A_k \ope_{-\tau_k} \\ \opD} \xi_t + \pmqty{B \\ \vb{0}} \vb{v}(t), \\
	\vb{z}(t) & = C\ope_0 \xi_t,
\end{align*}
where we, for later convenience, introduce an operator notation with evaluation
functionals $\ope_\theta \xi = \xi(\theta)$, identity $I\xi = \xi$, and
derivative operator $\opD \xi(\theta) = \dv{\theta} \xi(\theta)$.

We can now get to a discretization by approximating $\xi_t$ by
$\xi_{tN} \in \PP^n_N$, where $\PP^n_N$ is the space of polynomials of degree
$N$ mapping from $[-\tau_m, 0]$ to $\CC^n$. As $\opD$ reduces the degree of such
a polynomial by one, the dimensionality of the input side of
$\spmqty{\sum^m_{k=0} A_k \ope_{-\tau_k} \\ \opD}$ is equal to that of the
output side; it maps $\PP_N^n$ to $\CC^n \times \PP_{N-1}^n$. Note, however,
that we do not immediately have the same output space on the left hand side. We
will thus have to replace $I$ by some projection. In a Lanczos tau method this
is achieved by truncating the expansion of $\xi_{tN}$ in a degree-graded
orthogonal polynomial basis $\{\phi_k\}_{k=0}^N$ with support $[-\tau_m, 0]$.
That is, a set of polynomials which have as defining property that $\phi_k$ is
of degree $k$ and $\inp{\phi_j}{\phi_k} = 0$ if and only if $j \neq k$, where
$\inp{\,\cdot\,}{\,\cdot\,}$ is the inner product defining the sequence. The
truncation action then becomes
\[ \textstyle
	(\opT_{N-1} \xi)_j = (\xi)_j - \inp{(\xi)_j}{\phi_N} \frac{\phi_N}{\norm{\phi_N}^2}, \quad j = 1,\dots,n,
\]
which gives us the following state space approximation of the
time-delay system~\eqref{eq:rdde}
\begin{equation}
	\label{eq:rddediscr}
	\begin{aligned}
		\opE'_N \dot{\vb{x}}_N(t) & = \opA_N \vb{x}_N(t) + \opB_N \vb{v}(t), \\
		\vb{z}_N(t)               & = \opC_N \vb{x}_N(t),
	\end{aligned}
\end{equation}
with $\opE'_N = \spmqty{[\ope_0] \\ [\opT_{N-1}]}$,
$\opA_N = \spmqty{\sum^m_{k=0} A_k [\ope_{-\tau_k}] \\ [\opD]}$,
$\opB_N = \spmqty{B \\ \vb{0}}$, and $\opC_N = C[\ope_0]$. Here, $[\,\cdot\,]$
is an appropriate block matrix expression of the operator, or block vector
expression of the functional, and $\vb{x}_N(t)$ is the coefficient vector
representing the polynomial $\xi_{tN}$. Note that, as we think of $\opT_{N-1}$
and $\opD$ as mapping from $\PP_N^n$ to $\PP_{N-1}^n$, the resulting $\opE'_N$
and $\opA_N$ are square matrices.

Importantly, this implicit formulation can be recast as a standard state-space
representation. Note that $\ker \opT_{N-1} = \spn I_n \phi_N$. As a basic
property of orthogonal polynomials, the zeroes of $\phi_N$ lie in the interior
of its domain \citep[Theorem~3.3.1]{Szego1975}, hence $\phi_N(0) \neq 0$. As a
consequence, $\opE'_N$ has $\{\vb{0}\}$ as its kernel and is thus invertible.

As per Lemma~4.6 in \citet{Zhou1995}, we can now approximate
\[
	\norm{G}_{H^2} \approx \norm{G_N}_{H^2} = \sqrt{\tr(\opC_N P \opC_N^T)},
\]
where $G_N : \CC \to \CC^{n\times n}$ is the transfer function
of~\eqref{eq:rddediscr} and $P$ is the solution of the Lyapunov equation
$\opA_N P \opE_N^{\prime T} + \opE^\prime_N P \opA_N^T = -\opB_N\opB_N^T$.

\section{Computing the $H^2$-norm of a DDAE}\label{sec:h2ddae}
We now present our proposed approximation method for the $H^2$-norm of
system~\eqref{eq:ddae}. In the first subsection we describe the modifications
needed and prove those to be sound for a system with a finite strong $H^2$-norm.
We continue by proving that the resulting method converges, under mild
assumptions on the approximation, and illustrate typical convergence rates in
the second subsection.

\subsection{Lanczos tau methods for DDAEs}\label{ssec:ddaediscr}
Extending the method of subsection~\ref{ssec:h2rdde} to DDAEs is rather natural.
We start by multiplying by $E$ in the left hand side of the top row of the
discretization~\eqref{eq:rddediscr}, yielding
\begin{equation}
	\label{eq:ddaediscr}
	\begin{aligned}
		\opE_N \dot{\vb{x}}_N(t) & = \opA_N \vb{x}_N(t) + \opB_N \vb{v}(t), \\
		\vb{z}_N(t)              & = \opC_N \vb{x}_N(t),
	\end{aligned}
\end{equation}
with $\opE_N = \spmqty{E[\ope_0] \\ [\opT_{N-1}]}$,
$\opA_N = \spmqty{\sum^m_{k=0} A_k [\ope_{-\tau_k}] \\ [\opD]}$,
$\opB_N = \spmqty{B \\ \vb{0}}$, and $\opC_N = C[\ope_0]$.

The result is a DAE, from which we will now have to eliminate the algebraic part
before we can continue as in the previous section. We start by writing the DAE
such that the differential and algebraic parts are separated, analogous
to~\eqref{eq:stand-form}.

Let $V$ and $U$ be matrices with orthogonal columns, such that
$\spn V = \ker \opE_N$ and $\spn U^* = \ker \opE_N^*$. Furthermore, let the
columns of $A^\perp$ span the orthogonal complement of $\spn A$. Note that such
$U$, $V$, $U^\perp$, and $V^\perp$ are easily obtained from a rank revealing
decomposition of $\opE_N$. Applying the transform
$\vb{x}_N(t) = V^\perp \vb{x}_{N,1}(t) + V \vb{x}_{N,2}(t)$ and left multiplying
by $\spmqty{U^{\perp} \\ U}$ gives us
\begin{equation}
	\label{eq:proj-ddaediscr}
	\begin{aligned}
		 & \left\{\begin{aligned}
			          \opE_{11} \dot{\vb{x}}_{N,1}(t) & = \opA_{11}\vb{x}_{N,1}(t) + \opA_{12} \vb{x}_{N,2}(t) + \opB_1 \vb{v}(t), \\
			          \vb{0}                          & = \opA_{21}\vb{x}_{N,1}(t) + \opA_{22} \vb{x}_{N,2}(t) + \opB_2 \vb{v}(t),
		          \end{aligned}\right. \\
		 & \quad	\vb{z}_N(t)      = \opC_1 \vb{x}_{N,1}(t) + \opC_2 \vb{x}_{N,2}(t),
	\end{aligned}
\end{equation}
where
\begin{align*}
	\opE_{11} & = U^\bot \opE_N V^\bot, & \opA_{11} & = U^\bot \opA_N V^\bot,          & \opA_{12} & = U^\bot \opA_N V, \\
	\opA_{21} & = U \opA_N V^\bot,      & \opA_{22} & = U \opA_N V,                    & \opB_1    & = U^\bot \opB_N,   \\
	\opB_2    & = U\opB_N,              & \opC_1    & = \opC_N V^\bot, \quad\text{and} & \opC_2    & = \opC_N V.
\end{align*}

Unlike~\eqref{eq:stand-form}, we did not pick $V$ and $U$ such that $\opE_{11}$
is an identity matrix. Whilst this is indeed possible, it can lead to numerical
issues when the largest and smallest non-zero singular values of $\opE_N$ are
highly separated. Nonetheless, $\opE_{11}$ is invertible by construction, which
is sufficient for the remainder of our results.

To finally eliminate the algebraic part from~\eqref{eq:proj-ddaediscr}, we need
to show $\opA_{22}$ to be invertible. To this end, we first need the following
result, extending Theorem~\ref{thm:strng-stab-charac}.
\begin{lemma}\label{lem:stronger-strn-stab-cond}
	If the system~\eqref{eq:stand-form} is strongly exponentially stable then
	\[
		\max_{\vb{z} \in {\bar\DD}^m} \rho\pqty{\textstyle\sum^m_{k=1} A_{k,22} \, z_k} < 1,
	\]
	where $\bar\DD = \{ z \in \CC : \abs{z} \le 1 \}$ is the closed unit disc.
\end{lemma}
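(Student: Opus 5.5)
Given strong exponential stability, Theorem~\ref{thm:strng-stab-charac} yields
\[
	\max_{\vb*{\theta} \in [0,2\pi]^m} \rho\pqty{\textstyle\sum^m_{k=1} A_{k,22}\, e^{i\theta_k}} < 1,
\]
that is, the bound holds on the distinguished boundary (the torus $\mathbb{T}^m$) of the polydisc $\bar\DD^m$. The plan is to extend it to the whole closed polydisc by a maximum principle for the spectral radius of a matrix-valued polynomial. Write $F(\vb{z}) = \sum_{k=1}^m A_{k,22} z_k$, which is entire in $\vb{z}\in\CC^m$.

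The key tool is that $\vb{z} \mapsto \log\rho(F(\vb{z}))$ is plurisubharmonic; this is Vesentini's theorem. One can see it directly from $\rho(F) = \lim_{r\to\infty}\norm{F^r}^{1/r}$: each $\log\norm{F(\vb{z})^r}^{1/r}$ is plurisubharmonic, being $\tfrac1r\log$ of the norm of a holomorphic matrix function. The sequence $\norm{F^{r}}^{1/r}$ is not monotone, but $\rho = \inf_r \norm{F^{r}}^{1/r}$, and along the subsequence $r=2^j$ it is nonincreasing by submultiplicativity. A decreasing limit of plurisubharmonic functions is plurisubharmonic (or $\equiv-\infty$), so $\log\rho\circ F$ is plurisubharmonic. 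Restricting to a single variable at a time, the function $z_k\mapsto\rho(F(\vb{z}))$ with the other coordinates fixed is subharmonic. The maximum principle on the disc $\abs{z_k}\le 1$ then lets us push $\abs{z_k}$ to $1$ without decreasing $\rho$. Applying this iteratively in each coordinate gives
\[
	\max_{\bar\DD^m}\rho(F) = \max_{\mathbb{T}^m}\rho(F) < 1,
\]
where the maxima exist because $\rho$ is continuous and both sets are compact.

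An alternative, more elementary route avoids pluripotential theory. Suppose $\rho(F(\vb{z}_0))\ge1$ for some $\vb{z}_0\in\bar\DD^m$. Fix all coordinates except $z_1$ and consider the characteristic polynomial $\det(\lambda I - F(\vb{z}))$. The eigenvalues $\lambda(z_1)$ are algebraic functions of $z_1$, and one can apply the maximum principle to them on the disc, handling branch points by a Puiseux or Riemann-surface argument. I expect this bookkeeping to be the main obstacle, which is why I would rely on the subharmonicity of $\log\rho$ and only sketch its justification via the $\inf_r\norm{F^r}^{1/r}$ formula.

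A final subtlety is that the maximum principle bounds $\rho(F)$ by the supremum over the boundary of each one-variable disc. Iterating coordinate by coordinate reduces any point of $\bar\DD^m$ to a point of $\mathbb{T}^m$ with at least as large a spectral radius, so the strict inequality from Theorem~\ref{thm:strng-stab-charac} transfers to all of $\bar\DD^m$. Note that $\alpha(\vb*{\tau})<0$ is not needed for this step; only the torus condition is used.
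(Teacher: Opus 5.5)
Your argument is correct, but it follows a different route from the paper. The paper argues by contradiction using a hands-on continuation. It starts from $\vb{z}\in\bar\DD^m$ and an eigenvalue $\lambda$ of $F(\vb z)=\sum_{k=1}^m A_{k,22}z_k$ with $\abs{\lambda}\ge1$. It then pushes the eigenvalue outward to $a_1\lambda$, $a_1\ge1$, and follows a root $z_1(a_1)$ of $\det(a_1\lambda I_\nu-F(\vb z))=0$ with the other coordinates frozen. Because $\rho(F)\le\norm{F}$ forces $\abs{z_1(a_1)}\to\infty$, the branch crosses the unit circle at some $a_1^*\ge1$. This gives $\abs{z_1^*}=1$ with an eigenvalue $a_1^*\lambda$ still of modulus at least one. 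Repeating this in each coordinate lands on the torus and contradicts Theorem~\ref{thm:strng-stab-charac}.

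This is essentially the elementary route you sketched. The difference is that the paper parametrizes by the size of the eigenvalue and solves for $z_1$, rather than tracking eigenvalues as algebraic functions of $z_1$. That avoids the branch-point bookkeeping you were worried about. It needs only continuity of polynomial roots, but it leaves implicit the selection of a continuous branch and some degenerate cases. For instance, if $\det(\mu I_\nu-F(\vb z))$ does not depend on $z_1$, no branch $z_1(a_1)$ exists for $a_1>1$, and one must simply place $z_1^*$ on the circle by hand.

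Your appeal to Vesentini's theorem is heavier machinery. Your justification of it is sound: Gelfand's formula along $r=2^j$, where submultiplicativity makes the sequence nonincreasing, plus closedness of plurisubharmonic functions under decreasing limits. The coordinate-wise maximum principle then handles all degeneracies automatically and gives a direct rather than indirect proof. It would also apply unchanged to any holomorphic dependence of the matrix on $\vb z$. Like the paper, you use only the torus condition of Theorem~\ref{thm:strng-stab-charac}, not $\alpha(\vb*{\tau})<0$.
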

\begin{proof}
	The result is shown through contradiction. Assume~\eqref{eq:stand-form} is
	strongly exponentially stable and there do exist some
	$\abs{z_1}, \abs{z_2}, \dots, \abs{z_m} \le 1$ such that
	\[\textstyle
		\rho\pqty{\sum_{k=1}^m A_{k,22} \, z_k} \ge 1.
	\]
	Then there exists some $\abs{\lambda} \ge 1$ such that
	\[\textstyle
		\det\pqty{a_1 a_2\cdots a_m\lambda I_\nu - \sum_{k=1}^m A_{k,22} \, z_k} = 0,
	\]
	where, for now, $a_1 = a_2 = \dots = a_m = 1$. As a consequence of Rouché's
	theorem, an eigenvalue cannot disappear unless it moves to infinity. Through
	continuation, and keeping the other variables fixed, we thus have a mapping
	$[1, \infty) \ni a_1 \mapsto z_1(a_1)$. As the spectral radius is bounded by
	any natural matrix norm, $\abs{z_1(a_1)} \to \infty$ as $a_1 \to \infty$,
	there is thus some $a^*_1 \ge 1$ such that $z^*_1 = z_1(a^*_1)$ has modulus
	one. Repeating this procedure, we find
	$\abs*{z^*_1} = \abs*{z^*_2} = \cdots = \abs*{z^*_m} = 1$ and
	$a^*_1, a^*_2, \dots, a^*_m \ge 1$ such that
	\[\textstyle
		\det\pqty{a^*_1 a^*_2 \cdots a^*_m \lambda I_\nu - \sum_{k=1}^m A_{k,22} \, z^*_k} = 0.
	\]
	However, as $\abs{\lambda} \ge 1$, this would mean that
	\[\textstyle
		\rho\pqty{\sum_{k=1}^m A_{k,22} \, z^*_k} \ge \abs*{a^*_1 a^*_2 \cdots a^*_m \lambda} \ge 1,
	\]
	contradicting Theorem~\ref{thm:strng-stab-charac}.
\end{proof}

As we shall see shortly, for our discretization the $z_k$ of the above lemma
correspond to $\phi_N(-\tau_k)$, $k=0,1,\dots,m$. To use this result then, we
impose the following assumption on the basis.
\begin{assumption}\label{assum:max-in-zero}
	Choose the basis $\{\phi_j\}_{j=0}^N$ such that $\abs*{\phi_N(-\tau_k)} \le \abs*{\phi_N(0)}$
	for $k = 1,\dots,m$.
\end{assumption}
This condition on $\phi_N$ is satisfied for common choices of orthogonal
polynomials. For many Jacobi polynomials, and in particular those used in our
experiments, this is guaranteed by the well-known limits on their extrema
\citep[see e.g.][eq.\ 18.14.16]{Boisvert2010}.

We can then prove the following.
\begin{proposition}\label{prop:A22-inv}
	If the original DDAE~\eqref{eq:ddae} is strongly exponentially stable, and
	$\phi_N$ satisfies Assumption~\ref{assum:max-in-zero}, then $\opA_{22}$, as
	defined by~\eqref{eq:proj-ddaediscr}, is invertible.
\end{proposition}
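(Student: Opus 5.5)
We will identify $\ker\opE_N$ and $\ker\opE_N^*$ explicitly in operator terms. That makes $\opA_{22}=U\opA_NV$ equal, up to invertible factors, to the matrix $\sum_{k=0}^m A_{k,22}\,\phi_N(-\tau_k)$. Lemma~\ref{lem:stronger-strn-stab-cond} then gives invertibility. Since $U$ and $V$ are only determined up to right/left multiplication by invertible matrices, invertibility of $\opA_{22}$ does not depend on which bases are chosen. So we may compute with convenient bases built from $\bar U$ and $\bar V$ of the standard form~\eqref{eq:stand-form}.

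\emph{Right kernel.} Let $\xi\in\PP_N^n$ with $\opE_N\xi=0$. From $\opT_{N-1}\xi=0$ we get $\xi=w\,\phi_N$ for some $w\in\CC^n$, since $\ker\opT_{N-1}=\spn I_n\phi_N$. The first row then reads $E\,w\,\phi_N(0)=0$. Because $\phi_N(0)\neq0$, as its zeros lie in the interior of $[-\tau_m,0]$, this forces $w\in\ker E=\spn\bar V$. Hence $\ker\opE_N=\{\bar V y\,\phi_N : y\in\CC^\nu\}$, and we may take $V$ to represent the map $y\mapsto \bar V y\,\phi_N$.

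\emph{Left kernel.} We determine the range of $\opE_N$, which maps into $\CC^n\times\PP_{N-1}^n$. Since $\opT_{N-1}$ is onto $\PP_{N-1}^n$ and acts as the identity there, every pair $(E\eta(0),\eta)$ with $\eta\in\PP_{N-1}^n$ lies in the range. Adding the images $(E w\phi_N(0),0)$ of $w\phi_N$ shows that the range is exactly $\operatorname{range}E\times\PP_{N-1}^n$. Its orthogonal complement is therefore $\ker E^*\times\{0\}$, so we may take $U=\pmqty{\bar U & \vb{0}}$, i.e.\ $U$ annihilates the $\opD$-block row.

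\emph{Evaluating $\opA_{22}$.} With these choices,
\[
\opA_{22}y=\bar U\textstyle\sum_{k=0}^m A_k\,\bar V y\,\phi_N(-\tau_k)=\sum_{k=0}^m \phi_N(-\tau_k)\,A_{k,22}\,y .
\]
Here $A_{k,22}=\bar U A_k\bar V$ are exactly the blocks of~\eqref{eq:stand-form}, and $A_{0,22}=-I_\nu$. Dividing by $-\phi_N(0)\neq0$ gives
\[
\opA_{22}=-\phi_N(0)\Bigl(I_\nu-\textstyle\sum_{k=1}^m z_kA_{k,22}\Bigr),\qquad z_k=\phi_N(-\tau_k)/\phi_N(0).
\]
By Assumption~\ref{assum:max-in-zero} every $z_k$ lies in $\bar\DD$. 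Strong stability of~\eqref{eq:ddae} is equivalent to that of~\eqref{eq:stand-form}, so Lemma~\ref{lem:stronger-strn-stab-cond} yields $\rho\bigl(\sum_{k=1}^m z_kA_{k,22}\bigr)<1$. Hence $1$ is not an eigenvalue of $\sum_{k=1}^m z_kA_{k,22}$, and $\opA_{22}$ is invertible.

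The main obstacle is the left-kernel step. One has to check carefully that the range of $\opE_N$ is precisely $\operatorname{range}E\times\PP_{N-1}^n$, so that $U$ has no component in the derivative row. This is what ensures the $[\opD]$ block never enters $\opA_{22}$. The step also requires handling the identification of matrix representations $[\,\cdot\,]$ with operators, and the inner products used to define orthogonal complements, via invertible changes of basis that do not affect invertibility.
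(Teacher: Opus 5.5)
Your proposal is correct and follows essentially the same route as the paper. Both identify $\ker\opE_N=\{\bar V y\,\phi_N\}$ and the left null space as $\ker E^*\times\{0\}$, reduce $\opA_{22}$ up to invertible factors to $\sum_{k=0}^m A_{k,22}\,\phi_N(-\tau_k)$, normalise by $\phi_N(0)\neq 0$, and conclude with Lemma~\ref{lem:stronger-strn-stab-cond} and Assumption~\ref{assum:max-in-zero}. Two of your steps are slightly more careful than the paper's terse write-up: you derive the left null space from the range of $\opE_N$, and you use independent left and right basis changes instead of a single similarity $T$.
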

\begin{proof}
	Note that the vector $\vb*{\xi} \in \CC^{n(N+1)}$, corresponding to the
	polynomial $\xi \in \PP_N^n$, is in the right nullspace of $\opE_N$ (and thus
	in $\spn V$) if, and only if, $\ope_0 \xi \in \ker E$ and
	$\xi \in \ker \opT_{N-1} = \spn I_n\phi_N$. The tuple
	$(\vb{z}, \vb*{\zeta}) \in \CC^n \times \CC^{nN}$ is in the left nullspace of
	$\opE_N$ (and its complex transpose thus in $\spn U^*$) if, and only if,
	$\vb{z}^* \in \ker{(E[\ope_0])}^* = \ker E^*$ and
	$\vb*{\zeta}^* \in \ker {[\opT_{N-1}]}^* = \{\vb{0}^*\}$. In short
	\begin{gather*}
		\vb*{\xi} \in \spn V \iff \ope_0 \xi \in \ker E \wedge \xi \in \spn I_n\phi_N \quad \text{and}\\
		{(\vb{z}, \vb*{\zeta})}^* \in \spn U^* \iff \vb{z}^* \in \ker E^* \wedge \zeta = \vb{0}.
	\end{gather*}

	These results imply that there exists a basis transform,\footnote{Loosely
		speaking, $V$ and $U$ project onto to the right and left nullspaces of $E$
		respectively, where in $V$ the information can only be `carried' by
		$\phi_N$ and in $U$ the tail has to be the zero function.} i.e.\ an
	invertible matrix, $T$, such that
	\[
		\textstyle T \opA_{22} T^{-1} = T U \opA_N V T^{-1} = \spmqty{\bar{U}^* \\ \vb{0}}^* \opA_N \, [\phi_N \bar{V}] = \sum^m_{k=0} A_{k,22} \, \phi_N(-\tau_k),
	\]
	where $\bar{U}$, $\bar{V}$, $A_{0,22} = -I_\nu$, $A_{1,22}$, \dots, and
	$A_{m,22}$ are as defined in~\eqref{eq:stand-form}. As $\phi_N(0) \neq 0$
	\citep[Theorem~3.3.1]{Szego1975}, $\frac{1}{\phi_N(0)} \opA_{22}$ is
	invertible by Lemma~\ref{lem:stronger-strn-stab-cond} and
	Assumption~\ref{assum:max-in-zero} and so too $\opA_{22}$.
\end{proof}

Having established the invertibility of $\opA_{22}$, we can apply the Schur
complement formula to eliminate the algebraic part from~\eqref{eq:proj-ddaediscr}
and obtain
\begin{equation}\label{eq:ode-schur}
	\begin{aligned}
		\odeE \dot{\vb{x}}_{N,1}(t) & = \odeA \vb{x}_{N,1}(t) + \odeB \vb{v}(t), \\
		\vb{z}_N(t)                 & = \odeC \vb{x}_{N,1}(t) + \odeD \vb{v}(t),
	\end{aligned}
\end{equation}with
\begin{align*}
	\odeA & = \opA_{11} - \opA_{12}\opA_{22}^{-1}\opA_{21},           & \odeB                           & = \opB_1 - \opA_{12}\opA_{22}^{-1}\opB_2, \\
	\odeC & = \opC_1 - \opC_2\opA_{22}^{-1}\opA_{21}, \quad\text{and}
	      & \odeD                                                     & = - \opC_2\opA_{22}^{-1}\opB_2.
\end{align*}

The remaining challenge novel to DDAEs is then to show that if the
system~\eqref{eq:ddae} does not have feedthrough, we always have $\odeD=\vb{0}$.
Luckily, this turns out to be the case as shown in the following result.
\begin{theorem}\label{thm:no-feedthrough}
	If the DDAE~\eqref{eq:ddae} has a finite strong $H^2$-norm and $\phi_N$
	satisfies Assumption~\ref{assum:max-in-zero}, then the
	system~\eqref{eq:ode-schur} does not have feedthrough, i.e.\
	$\tilde{D} = \vb{0}$.
\end{theorem}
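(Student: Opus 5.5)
The plan is to compute $\odeD = -\opC_2\opA_{22}^{-1}\opB_2$ explicitly, using the same basis transform $T$ as in the proof of Proposition~\ref{prop:A22-inv}, and then to show that it vanishes by means of a Neumann-type series expansion combined with the algebraic condition of Theorem~\ref{thm:strng-h2-charac}.

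First, by the characterisation of $\spn V$ and $\spn U^*$ obtained there, we may write $V T^{-1} = [\phi_N \bar V]$ and $TU = \spmqty{\bar U^* \\ \vb{0}}^*$. Consequently
\[
	\opC_2 T^{-1} = C[\ope_0][\phi_N\bar V] = \phi_N(0)\, C\bar V
	\qquad\text{and}\qquad
	T\opB_2 = \bar U B = B_2 .
\]
With the normalisation of~\eqref{eq:stand-form} we also have $C\bar V = C_2$. The factors of $T$ cancel, so
\[
	\odeD = -\phi_N(0)\, C_2 \Bigl(\textstyle\sum_{k=0}^m A_{k,22}\,\phi_N(-\tau_k)\Bigr)^{-1} B_2 .
\]
Writing $z_k = \phi_N(-\tau_k)/\phi_N(0)$ for $k\ge1$ and using $A_{0,22} = -I_\nu$, the sum becomes $-\phi_N(0)\bigl(I_\nu - M\bigr)$ with $M = \sum_{k=1}^m A_{k,22} z_k$. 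This yields the clean expression
\[
	\odeD = C_2 (I_\nu - M)^{-1} B_2 .
\]
Care is needed here with the orthonormality and normalisation conventions that identify $\bar U$ and $\bar V$ with the blocks of~\eqref{eq:stand-form}. However, any invertible rescaling only multiplies $C_2$ and $B_2$ by invertible factors, so it does not affect whether the result is zero.

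Second, Assumption~\ref{assum:max-in-zero} gives $|z_k|\le 1$, so $\vb z\in\bar\DD^m$. Strong stability is part of the finite strong $H^2$-norm hypothesis in Theorem~\ref{thm:strng-h2-charac}, so Lemma~\ref{lem:stronger-strn-stab-cond} gives $\rho(M)<1$. The Neumann series therefore converges:
\[
	(I_\nu-M)^{-1} = \textstyle\sum_{r\ge0} M^r .
\]
By Proposition~\ref{prop:sum-power}, restricted to the indices $1,\dots,m$ (i.e.\ with $k_0=0$), we get
\[
	M^r = \textstyle\sum_{\norm{\vb k}_1 = r} \vb z^{\vb k}\, \opP_{\vb k} .
\]
Hence
\[
	\odeD = \textstyle\sum_{r\ge 0}\sum_{\norm{\vb k}_1=r} \vb z^{\vb k}\, C_2\opP_{\vb k}B_2 .
\]
Every term vanishes because $C_2\opP_{\vb k}B_2=\vb 0$ for all $\vb k$ by Theorem~\ref{thm:strng-h2-charac}. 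This includes $\vb k=\vb 0$, where the condition reads $C_2B_2=\vb 0$, i.e.\ no direct feedthrough. Therefore $\odeD=\vb 0$.

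As an alternative that avoids series convergence, Cayley--Hamilton expresses $(I-M)^{-1}$ as a polynomial in $M$ of degree $<\nu$. This uses exactly the sufficient range $\norm{\vb k}_1<\nu$ from the theorem.

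The main obstacle I expect is the first step: verifying rigorously that the transform $T$ acts compatibly on $\opC_2$ and $\opB_2$, and not only inside $\opA_{22}$. Concretely, one must check that the same $T$ used for $U$ and $V$ simultaneously produces $\bar U B$ and $C\bar V\phi_N(0)$ up to invertible factors, given that $U$ and $V$ were only required to have orthonormal columns. I would handle this by writing $V=[\phi_N\bar V]R$ and $U=S\spmqty{\bar U^*\\ \vb 0}^*$ for invertible $R,S$. Then $\odeD = -\phi_N(0)\,C\bar V\,(\bar U\opA_N[\phi_N\bar V])^{-1}\bar U B$, and the $R$ and $S$ factors cancel.
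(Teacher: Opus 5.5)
Your proposal is correct and follows essentially the paper's argument. You reduce $\tilde D$, via the nullspace structure of $\opE_N$, to $C_2\bigl(\sum_{k=0}^m A_{k,22}\,\phi_N(-\tau_k)\bigr)^{-1}B_2$ up to the nonzero factor $\phi_N(0)$; you write the inverse as a Neumann series or, as the paper does, a Cayley--Hamilton polynomial; you expand the powers with Proposition~\ref{prop:sum-power}; and every term vanishes by $C_2\opP_{\vb{k}}B_2=\vb{0}$ from Theorem~\ref{thm:strng-h2-charac}. Your extra care is welcome: you track $\phi_N(0)$, you factor out $A_{0,22}=-I_\nu$ so only $k_0=0$ indices appear, and you write $V=[\phi_N\bar V]R$, $U=S\spmqty{\bar U^*\\ \vb{0}}^*$ because the paper's single $T$ actually gives an equivalence, not a similarity. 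This tightens the paper's presentation without changing the route.
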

\begin{proof}
	As a finite strong $H^2$-norm implies strong exponential stability, we can
	continue the proof of Proposition~\ref{prop:A22-inv}. Analogously to the
	argument there, we have $T \opB_2 = TU\opB_N = B_2$ and
	$\opC_2 T^{-1} = \opC_N V T^{-1} = C_2$, thus
	\[
		\odeD = -\opC_2 T^{-1} T \opA_{22}^{-1} T^{-1} T \opB_2 = -C_2\pqty{\textstyle\sum^m_{k=0} A_{k,22} \,
		\phi_N(-\tau_k)}^{-1}B_2.
	\]
	Because of the invertibility of $\opA_{22}$ and a standard corollary of the
	Cayley--Hamilton theorem, there exists some polynomial $p$ of finite degree
	such that
	\begin{equation}
		\label{eq:D-with-poly-inv}
		\tilde{D} = -C_2\,p\pqty{\textstyle\sum^m_{k=0} A_{k,22} \, \phi_N(-\tau_k)}\,B_2.
	\end{equation}
	From Proposition~\ref{prop:sum-power} we have, for $r \in \NN$,
	\[\textstyle
		C_2\pqty{\sum^m_{k=0} A_{k,22} \, \phi_N(-\tau_k)}^r B_2 = \sum_{\substack{\vb{k}\in\NN^m \\ \norm{\vb{k}}_1 = r}} \phi_N(-\tau_0)^{k_0} \cdots \phi_N(-\tau_m)^{k_m} \, C_2\opP_{\vb{k}}B_2.
	\]
	As~\eqref{eq:D-with-poly-inv} entails that $\odeD$ is a weighted sum of such
	terms, we finally have $\odeD = \vb{0}$ from
	Theorem~\ref{thm:strng-h2-charac}.
\end{proof}

As is the case for RDDEs, a stable system need not produce a stable
discretization. As is commonly done in reduced order modelling \citep[see
	e.g.][]{Conni2024}, one can reflect the unstable eigenvalues across the
imaginary axis if the discretization were to be unstable. Typically, these
spurious poles have a high frequency, as the transfer function converges rapidly
on compact sets (see Theorem~\ref{thm:GN-conv}), flipping thus induces only a
minor additional error on the transfer function and hence the computed
$H^2$-norm. As we shall comment on more extensively in the next subsection, the
used discretization seems to preserve stability for $N$ sufficiently large, so
alternatively one can attempt to increase $N$ if the approximation is not
stable. In either case, unstable approximations of stable systems are
exceedingly rare.

Assuming we have a stable discretization and the conditions for
Theorem~\ref{thm:no-feedthrough} hold, we can finally, as before, approximate
the strong $H^2$-norm as
\begin{equation}
	\label{eq:ddae-h2-approx}
	\norm{G}^{\mathrm{s}}_{H^2} \approx \norm{G_N}_{H^2} = \sqrt{\operatorname{tr}(\odeC P \odeC^T)},
\end{equation}
where $P$ is the solution of
$\odeA P \odeE^T + \odeE P \odeA^T = -\odeB\odeB^T$.

We thus arrive at the following procedure to approximate the strong $H^2$-norm
of the DDAE system~\eqref{eq:ddae}:
\begin{enumerate}
	\item Test for strong exponential stability using the method described in
	      \citet[section~4.3]{Michiels2011}. When this check fails, we
	      have $\norm{G}^{\mathrm{s}}_{H^2} = \infty$.
	\item Test for feedthrough under infinitesimal perturbations using
	      Theorem~\ref{thm:strng-h2-charac}. If this test fails, we again know that
	      $\norm{G}^{\mathrm{s}}_{H^2} = \infty$.
	\item Construct~\eqref{eq:ode-schur} and, if exponentially stable,
	      use~\eqref{eq:ddae-h2-approx} to approximate
	      $\norm{G}^{\mathrm{s}}_{H^2}$. When the approximation is not stable,
	      first flip the unstable poles across the imaginary axis.
\end{enumerate}

\subsection{Convergence properties}\label{ssec:conv}
In the previous subsection we proposed to approximate the $H^2$-norm of a DDAE
from a Lanczos tau approximation and addressed the problem of feedthrough in
Theorem~\ref{thm:no-feedthrough}, however, we have yet to show it to converge.
To our knowledge no proof for the convergence of the method of
\citet{Vanbiervliet2011}, addressing time-delay systems of retarded type,
exists. In this subsection we shall show our approach to converge, assuming the
discretization preserves stability and has a sufficiently good (in the sense of
Assumptions~\ref{assum:conv-exp} through~\ref{assum:A22-bounded}) underlying
approximation of the exponential. As DDAEs form a superset of RDDEs, this gives
an argument for the RDDE case as a side effect. We will conclude the subsection
with a number of experiments illustrating typical convergence rates of the
method. We start with the following result, which is a straightforward extension
of Proposition~3.1 of \citet{ProvoostMichiels2024}.
\begin{proposition}\label{prop:rat-exp}
	The transfer function of \eqref{eq:ddaediscr} can be expressed as
	\[\textstyle
		G_N(s) = C\pqty{sE - \sum_{k=0}^m A_k r_N(s, -\tau_k)}^{-1}B,
	\]
	where $r_N(s, \theta)$ is a rational function of type $(N,N)$ in $s \in \CC$
	and a polynomial of degree $N$ in $\theta \in [-\tau_m, 0]$, such that
	\[
		\begin{cases}
			r_N(s, 0) = 1, \\
			\opD r_N(s, \,\cdot\,) = s\opT_{N-1} r_N(s, \,\cdot\,).
		\end{cases}
	\]
\end{proposition}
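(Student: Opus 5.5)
The approach is to compute $G_N(s) = \opC_N (s\opE_N - \opA_N)^{-1}\opB_N$ directly by solving the resolvent equation in polynomial form. Fix $s$ such that $s\opE_N - \opA_N$ is invertible. Let $\xi \in \PP_N^n$ be the polynomial whose coefficient vector is $\vb*{\xi} = (s\opE_N - \opA_N)^{-1}\opB_N \vb{u}$ for some $\vb{u} \in \CC^p$. Reading off the two block rows of $(s\opE_N - \opA_N)\vb*{\xi} = \opB_N\vb{u}$ gives
\[
	sE\,\ope_0\xi - \textstyle\sum_{k=0}^m A_k \ope_{-\tau_k}\xi = B\vb{u}, \qquad s\opT_{N-1}\xi - \opD\xi = \vb{0}.
\]
The plan is to show that the second (tail) equation forces $\xi$ to factor as $\xi(\theta) = r_N(s,\theta)\,\vb{c}$, with $\vb{c} = \ope_0\xi \in \CC^n$ and a scalar function $r_N$ that is independent of the system matrices. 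Substituting this into the first row gives $(sE - \sum_k A_k r_N(s,-\tau_k))\vb{c} = B\vb{u}$. Since $G_N(s)\vb{u} = C\ope_0\xi = C\vb{c}$, the claimed formula follows, first at $s$ where the scalar problem below is solvable and then as an identity of rational functions.

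The key step is to define $r_N$ through the scalar problem: find $r \in \PP_N$ (scalar polynomials on $[-\tau_m,0]$) with $r(0)=1$ and $\opD r = s\opT_{N-1} r$. This is the same construction as in Proposition~3.1 of \citet{ProvoostMichiels2024}, and it is unaffected by $E$ because $E$ enters only the first block row. The scalar operator $\spmqty{\ope_0 \\ s\opT_{N-1} - \opD}$ maps $\PP_N$ to $\CC\times\PP_{N-1}$. Both spaces have dimension $N+1$, and in the coefficient basis $\{\phi_j\}$ this operator is a matrix whose entries are affine in $s$. Its determinant is therefore a polynomial $d(s)$ of degree at most $N$. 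At $s=0$ the operator reduces to $\spmqty{\ope_0 \\ -\opD}$, which is injective because a polynomial with zero derivative and zero value at $0$ vanishes; hence $d \not\equiv 0$. By Cramer's rule, the unique solution $r_N(s,\cdot)$ has coefficients that are ratios of polynomials of degree at most $N$ over $d(s)$. So $r_N$ is of type $(N,N)$ in $s$ and, by construction, a polynomial of degree $N$ in $\theta$.

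For the factorization, note that $s\opT_{N-1}-\opD$ acts componentwise on $\PP_N^n$. Its kernel is therefore the set of $\xi$ whose components each lie in the scalar kernel $\{r \in \PP_N : \opD r = s\opT_{N-1} r\}$. Wherever $d(s)\neq 0$ this scalar kernel is exactly $\spn\{r_N(s,\cdot)\}$, because the scalar operator is bijective and the condition $r(0)=1$ fixes the scale. Hence $\xi = r_N(s,\cdot)\,\ope_0\xi$. The expected main obstacle is to justify that $G_N$ is defined where it matters and that the exceptional $s$ cause no trouble. Both sides of the claimed identity are rational in $s$, and they agree on the open set where $d(s)\neq 0$ and both resolvents exist. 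For the right-hand side's resolvent to exist on such a set, it suffices that $sE - \sum_k A_k r_N(s,-\tau_k)$ is generically invertible. This holds exactly when $s\opE_N - \opA_N$ is a regular pencil, since for $d(s)\neq0$ the map $\xi \mapsto \ope_0\xi$ identifies the kernels of the two operators. Regularity is what the previous subsection's invertibility of $\opE_{11}$ and $\opA_{22}$ (Proposition~\ref{prop:A22-inv}) supplies, and identity of rational functions then extends the equality everywhere.
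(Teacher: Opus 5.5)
Your proof is correct and follows essentially the paper's route: both reduce the discretized resolvent to the $n\times n$ matrix $sE-\sum_k A_k r_N(s,-\tau_k)$ by using the tail equation to make the discretized state a multiple of $r_N(s,\cdot)$. The only difference in direction is that the paper posits the ansatz $\vb{r}_N(s)(\cdots)^{-1}B\hat{\vb{v}}(s)$ and checks that it solves the Laplace-transformed system, whereas you derive that form from the kernel of $s\opT_{N-1}-\opD$. Your Cramer's-rule argument for the type $(N,N)$ and your kernel identification, which shows that for $d(s)\neq0$ the matrix $sE-\sum_k A_k r_N(s,-\tau_k)$ is invertible exactly when $s\opE_N-\opA_N$ is, fill in points the paper leaves implicit or takes from \citet{ProvoostMichiels2024}. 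The appeal to Proposition~\ref{prop:A22-inv} is unnecessary and imports hypotheses the statement lacks, since speaking of the transfer function of \eqref{eq:ddaediscr} already presupposes a regular pencil.
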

\begin{proof}
	Taking the Laplace transform of~\eqref{eq:ddaediscr}, assuming zero initial conditions, gives
	\begin{equation}\label{eq:discr-laplace}
		\begin{cases}
			sE[\ope_0] \hat{\vb{x}}_N(s) = \sum_{k=0}^m A_k [\ope_{-\tau_k}] \hat{\vb{x}}_N(s) + B\hat{\vb{v}}(s), \\
			s[\opT_{N-1}] \hat{\vb{x}}_N(s) = [\opD] \hat{\vb{x}}_N(s),                                            \\
			\hat{\vb{z}}_N(s) = C[\ope_0] \hat{\vb{x}}_N(s),
		\end{cases}
	\end{equation}
	where $\hat{f}(s)$ is the Laplace transform of $f(t)$. Let $\vb{r}_N(s)$ be
	the coefficients of a matrix valued polynomial such that
	\[
		\begin{cases}
			[\ope_0]\vb{r}_N(s) = I_n, \\
			s[\opT_{N-1}] \vb{r}_N(s) = [\opD] \vb{r}_N(s).
		\end{cases}
	\]
	From this definition we easily see that
	\[\textstyle
		\hat{\vb{x}}_N(s) = \vb{r}_N(s) \pqty{sE[\ope_0]\vb{r}_N(s) - \sum_{k=0}^m A_k [\ope_{-\tau_k}]\vb{r}_N(s)}^{-1} B \hat{\vb{v}}(s)
	\]
	satisfies~\eqref{eq:discr-laplace}, and hence
	\begin{align*}
		G_N(s) & = \textstyle C[\ope_0]\vb{r}_N(s)\pqty{sE[\ope_0]\vb{r}_N(s) - \sum_{k=0}^m A_k [\ope_{-\tau_k}]\vb{r}_N(s)}^{-1} B \\
		       & = \textstyle C\pqty{sE - \sum_{k=0}^m A_k [\ope_{-\tau_k}]\vb{r}_N(s)}^{-1} B.
	\end{align*}
	As $[\ope_\theta] \vb{r}_N(s) = I_n r_N(s, \theta)$, the claim holds.
\end{proof}

Later we will need an expression of $G_N$ where the differential and algebraic
parts are separated. Using the standard form of~\eqref{eq:stand-form} we can
rewrite this transfer function as
\[
	G_N(s) = \spmqty{C_1 & C_2}\spmqty{sI_{n - \nu} - A_{N,11}(s) & - A_{N,12}(s) \\ - A_{N,21}(s) & - A_{N,22}(s)}^{-1}\spmqty{B_1 \\ B_2},
\]
where $A_{N,ij}(s) = \textstyle \sum_{k=0}^m A_{k,ij} r_N(s, -\tau_k)$,
$A_{0,22} = -I_\nu$, and all other matrices as in~\eqref{eq:stand-form}. Using
the Schur complement formula we obtain the following expression, where we elided
the $s$ dependence for readability,
\begin{equation}
	\label{eq:GN-no-E}
	G_N(s) = \spmqty{C_1 & C_2} \spmqty{D^{-1}_N & - D^{-1}_N A_{N,12} A^{-1}_{N,22} \\ - A^{-1}_{N,22} A_{N,21} D^{-1}_N & A^{-1}_{N,22} + A^{-1}_{N,22} A_{N,21} D^{-1}_N A_{N,12} A^{-1}_{N,22}}\spmqty{B_1 \\ B_2},
\end{equation}
with $D_N(s) = sI_{n - \nu} - A_{N,11}(s) - A_{N,12}(s) A^{-1}_{N,22}(s) A_{N,21}(s)$.

By inspection of the definition of $r_N(s, \theta)$, we see that for every
$s \in \CC$ we have a polynomial approximation of
$[-\tau_m, 0] \ni \theta \mapsto e^{\theta s}$, hence, by comparing $G_N$
to~\eqref{eq:tf}, Proposition~\ref{prop:rat-exp} can also be interpreted as
approximating each $s \mapsto e^{-\tau_k s}$ in $G(s)$ by
$s \mapsto r_N(s, -\tau_k)$. The quality of the approximation of $G$ by $G_N$,
and hence the convergence of the $H^2$-norm, thus depends on the quality of the
approximation of the exponential. In what follows we shall need the following
two assumptions on the convergence of these $r_N$.
\begin{assumption}\label{assum:conv-exp}
	The basis $\{\phi_k\}_{k=0}^N$ is chosen such that, for any $s \in \CC$, the
	polynomial $[-\tau_m, 0] \ni \theta \mapsto r_N(s, \theta)$ converges
	uniformly to $\theta \mapsto e^{\theta s}$ as $N \to \infty$.
\end{assumption}
\begin{assumption}\label{assum:rN-bounded}
	The basis $\{\phi_k\}_{k=0}^N$ is chosen such that there is a function $f$ of
	order $o(\abs{\omega}^{\frac{1}{4}})$, i.e.\
	$\lim_{\omega \to \infty} \abs{\omega}^{-\frac{1}{4}} f(\omega) = 0$, bounding
	$\abs*{r_N(i\omega, -\tau_k)}$ for all $k = 0,\dots, m$, $\omega \in \RR$,
	and~$N$.
\end{assumption}
These assumptions are suggested by the common experience that, for a good choice
of basis, the result of a tau method (such as, by its definition,
$r_N(s, \,\cdot\,)$) is quasi-optimal, i.e.\ with an error within some factor
$C \ge 1$ from that of the best polynomial approximation of the true solution of
the same degree \citep{Boyd2001}. In the case of Chebyshev polynomials, this
observation is motivated by the spectral accuracy of the derivative
\citep{Tadmor1986}.

Because of the equivalence between a tau method and collocation in the zeroes
of~$\phi_N$ \citep{Lanczos1938}, Assumption~\ref{assum:conv-exp} is guaranteed
by Proposition~5.1 of \citet{Breda2015} if the Lebesgue constant associated with
those zeroes grows sublinearly, i.e.\ is of order $o(N)$.

For Assumption~\ref{assum:rN-bounded} we know of no such readily available
result. As the exponential is an entire function, the best polynomial
approximation of $[-\tau_m, 0] \ni \theta \mapsto e^{s\theta}$ converges
super-geometrically in the supremum norm \citep{Bernstein1912}. On the imaginary
axis, i.e.\ $s = i\omega$, the constant zero polynomial has error $1$, trivially
and uniformly bounding the error of the best polynomial approximation in
$\omega$ and $N$. A bounded approximation error to a bounded target function
implies that the best polynomial approximation itself is uniformly bounded in
$\omega$ and $N$. However, the quasi-optimality factor is problem dependent and
can thus grow with $\omega$. Numerical experiments fortunately suggest this
growth is only logarithmic, attaining maximal error for
$\omega \sim \frac{1}{\tau_m} N \log(N)$, which is well within the requirements
of Assumption~\ref{assum:rN-bounded}.

Using the following lemma we will be able to extend the convergence of
Assumption~\ref{assum:conv-exp} to $G_N$.
\begin{lemma}\label{lem:conv-inv}
	If the inverse of a matrix function $F_N(s)$ is bounded in the spectral norm
	$\norm{\,\cdot\,}_2$ on some compact subset $\Omega$ of the complex plane and
	$F_N(s)$ converges as $O(f(N))$ to $F(s)$ for $N \to \infty$ in the
	Frobenius norm $\norm{\,\cdot\,}_F$ on $\Omega$, then $F_N^{-1}(s)$ converges
	at the same order to $F^{-1}(s)$ on $\Omega$.
\end{lemma}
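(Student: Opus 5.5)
The plan is to use the standard resolvent identity
\[
	F_N^{-1}(s) - F^{-1}(s) = F_N^{-1}(s)\bigl(F(s) - F_N(s)\bigr)F^{-1}(s),
\]
valid wherever both inverses exist. Taking Frobenius norms and using the mixed inequality $\norm{XYZ}_F \le \norm{X}_2 \norm{Y}_F \norm{Z}_2$, the convergence of $F_N^{-1}$ reduces to three bounds, uniform in $s \in \Omega$: on $\norm{F_N^{-1}(s)}_2$, on $\norm{F^{-1}(s)}_2$, and on $\norm{F(s)-F_N(s)}_F$.

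By hypothesis, there are $M$ and $N_0$ with $\norm{F_N^{-1}(s)}_2 \le M$ for all $s \in \Omega$ and $N \ge N_0$. This is how I read "bounded", namely uniformly in $s$ and in $N$ large; that reading is needed for the statement to make sense. Also by hypothesis, $\sup_{s\in\Omega}\norm{F_N(s)-F(s)}_F \le K f(N)$ for large $N$. I will tacitly take $f(N) \to 0$, since the lemma is meant to express convergence.

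The main obstacle is that the statement does not explicitly assume $F(s)$ is invertible on $\Omega$, so I must derive both its invertibility and a uniform bound on $\norm{F^{-1}(s)}_2$. I would use a Neumann series argument. Write
\[
	F = F_N\bigl(I - F_N^{-1}(F_N - F)\bigr).
\]
Since $\norm{F_N^{-1}(F_N-F)}_2 \le M \norm{F_N - F}_F \le MKf(N)$, this quantity is at most $\tfrac12$ for $N$ large, uniformly on $\Omega$. Then the bracket is invertible with inverse of spectral norm at most $2$. Hence $F(s)$ is invertible and $\norm{F^{-1}(s)}_2 \le 2M$ for every $s\in\Omega$. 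Here I use that the spectral norm is bounded by the Frobenius norm.

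Combining these bounds gives
\[
	\sup_{s\in\Omega}\norm{F_N^{-1}(s)-F^{-1}(s)}_F \le M\cdot Kf(N)\cdot 2M = O(f(N)),
\]
which is the claimed convergence at the same order. If a spectral-norm statement is preferred, it follows the same way, since $\norm{\cdot}_2\le\norm{\cdot}_F$. Compactness of $\Omega$ enters only through the uniformity of the given bounds; if $F$ is known to be continuous and invertible on $\Omega$, compactness alone would also give the bound on $F^{-1}$. The Neumann argument avoids needing that assumption.
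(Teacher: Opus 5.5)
Your proof is correct, but it is organized differently from the paper's.

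The paper sets $E_N = F_N - F$ and expands around the limit: $F_N^{-1} = (F+E_N)^{-1} = F^{-1} - F^{-1}E_NF^{-1} + O(\norm{E_N}_F^2)$. It cites first-order perturbation theory of the inverse (Stewart--Sun) for this. It then bounds the linear term by $\norm{F^{-1}}_2^2\norm{E_N}_F$ through vectorization, which is the same mixed spectral/Frobenius inequality you use.

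That route is shorter but asymptotic. It silently requires $F(s)$ to be invertible with $\norm{F^{-1}(s)}_2$ bounded uniformly on $\Omega$. This is needed both for the final estimate and to make the constant hidden in the quadratic remainder uniform. The lemma, however, only assumes a bound on $F_N^{-1}$.

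Your exact identity $F_N^{-1} - F^{-1} = F_N^{-1}(F - F_N)F^{-1}$ has no remainder term. Your Neumann step $F = F_N\bigl(I - F_N^{-1}(F_N - F)\bigr)$ derives exactly the missing ingredients from the stated hypothesis: invertibility of $F$ on $\Omega$ and $\norm{F^{-1}}_2 \le 2M$. This gives the explicit uniform bound $2M^2Kf(N)$.

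The readings you make explicit are equally implicit in the paper's argument. These are uniformity of the bound on $F_N^{-1}$ in $s$ and in large $N$, and $f(N)\to 0$. So your version is the more self-contained of the two, while the paper's gains brevity by relying on a standard perturbation expansion.
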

\begin{proof}
	Let $E_N(s) = F_N(s) - F(s)$, then this result simply follows from
	\begin{align*}
		F_N^{-1}(s) & = {(F(s) + E_N(s))}^{-1}                                      \\
		            & = F^{-1}(s) - F^{-1}(s)E_N(s)F^{-1}(s) + O(\norm{E_N(s)}_F^2) \\
		            & = F^{-1}(s) + O(\norm{F^{-1}(s)}^2_2 \norm{E_N(s)}_F)         \\
		            & = F^{-1}(s) + O(f(N)).
	\end{align*}
	A derivation for the approximation of the perturbed matrix inverse can be
	found in \citet[section~2.4]{StewartSun1990}. The penultimate step is found
	by vectorizing the middle term.
\end{proof}

We can then prove the following result.
\begin{theorem}\label{thm:GN-conv}
	If Assumption~\ref{assum:conv-exp} holds, then $G_N$ converges uniformly to
	$G$ as $N \to \infty$ on compact sets in $\CC$ that do not contain a
	singularity for $N$ sufficiently large.
\end{theorem}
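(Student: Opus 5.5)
We work with the representation of Proposition~\ref{prop:rat-exp}, that is, $G_N(s) = C F_N(s)^{-1} B$ with
\[
	F_N(s) = sE - \textstyle\sum_{k=0}^m A_k r_N(s,-\tau_k),
\]
and compare it with $G(s) = C F(s)^{-1} B$, where $F(s) = sE - \sum_{k=0}^m A_k e^{-\tau_k s}$. The argument has three steps: show $F_N \to F$ uniformly on compacts, invert using Lemma~\ref{lem:conv-inv}, and finally multiply by the fixed matrices $C$ and $B$.

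Fix a compact set $\Omega \subset \CC$ on which, for $N$ sufficiently large, neither $G$ nor $G_N$ has a singularity. More precisely, we take $\Omega$ to avoid the zeros of $\det F$ and of $\det F_N$ for $N \ge N_0$.

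\textbf{Step 1: convergence of $F_N$.} Assumption~\ref{assum:conv-exp} gives only pointwise convergence in $s$, uniform in $\theta$. We upgrade it to uniform convergence on $\Omega$ via the tau characterisation of Proposition~\ref{prop:rat-exp}. Writing $r_N(s,\theta) = \sum_{j=0}^N c_j(s)\phi_j(\theta)$, the conditions $r_N(s,0)=1$ and $\opD r_N = s\opT_{N-1} r_N$ form a linear system whose matrix depends polynomially on $s$. The coefficients $c_j$ are therefore rational in $s$, and $r_N$ is analytic in $s$ away from finitely many poles.

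To get uniformity we would argue as follows. On a slightly larger compact neighbourhood $\Omega'$, the family $\{r_N(\cdot,-\tau_k)\}_N$ should be locally bounded. Pointwise convergence together with local boundedness then yields locally uniform convergence by Vitali's theorem, provided the $r_N$ are holomorphic there.

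If instead Assumption~\ref{assum:conv-exp} is read as holding locally uniformly in $s$, which we consider the intended meaning, Step 1 is immediate: $\norm{F_N(s)-F(s)}_F \le \sum_k \norm{A_k}_F \sup_{s\in\Omega}\abs{r_N(s,-\tau_k)-e^{-\tau_k s}} \to 0$.

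\textbf{Step 2: convergence of $F_N^{-1}$.} Since $\det F$ is continuous and nonzero on the compact set $\Omega$, $\norm{F^{-1}(s)}_2$ is bounded there. Uniform convergence of $F_N$ then gives $\det F_N \to \det F$ uniformly, so $\norm{F_N^{-1}}_2$ is uniformly bounded on $\Omega$ for $N$ large. This is exactly the hypothesis of Lemma~\ref{lem:conv-inv}.

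Applying that lemma, or more rigorously the exact identity $F_N^{-1} - F^{-1} = -F_N^{-1}(F_N - F)F^{-1}$, we obtain
\[
	\sup_{\Omega}\norm{F_N^{-1}-F^{-1}}_2 \le \sup\norm{F_N^{-1}}_2 \, \sup\norm{F^{-1}}_2 \, \sup\norm{F_N-F}_F \to 0.
\]

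\textbf{Step 3: conclusion.} Multiplying by $C$ and $B$ gives uniform convergence of $G_N$ to $G$ on $\Omega$.

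The main obstacle is Step 1, specifically securing uniformity in $s$ and local boundedness of $r_N$ near $\Omega$. Uniformity in $s$ is where the interpretation of Assumption~\ref{assum:conv-exp} matters; beyond that, everything is routine perturbation theory.
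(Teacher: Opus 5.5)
Your proof is the paper's proof: uniform convergence of $s \mapsto r_N(s,-\tau_k)$ on compact sets, then Proposition~\ref{prop:rat-exp} with Lemma~\ref{lem:conv-inv}, and finally multiplication by the constant matrices $C$ and $B$. The paper gets the uniformity in $s$ by asserting it as a ``straightforward corollary'' of Assumption~\ref{assum:conv-exp} (``combining the largest error and slowest error decay on a compact set''), which amounts to the same locally uniform reading you adopt; your explicit flagging of that step, and your use of the exact identity $F_N^{-1}-F^{-1}=-F_N^{-1}(F_N-F)F^{-1}$ (which also checks the uniform bound on $F_N^{-1}$ that the lemma presupposes), make your version if anything more careful than the original, while the Vitali alternative you sketch would still need local boundedness and absence of poles of $r_N$ near $\Omega$, which under that reading you do not need.
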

\begin{proof}
	As a straightforward corollary of Assumption~\ref{assum:conv-exp}, we have, by
	combining the largest error and slowest error decay on a compact set, that the
	function $s \mapsto r_N(s, \theta)$ converges uniformly to
	$s \mapsto e^{\theta s}$ on that set. Since multiplication with a constant
	matrix and addition of functions does not change the convergence properties we
	finally have the required result from Proposition~\ref{prop:rat-exp} and
	Lemma~\ref{lem:conv-inv}.
\end{proof}

Proving convergence of the $H^2$-norm will rely on the local convergence of the
transfer function established in the previous theorem and bounding the tail. For
the latter we will need, in conjunction with Assumption~\ref{assum:rN-bounded},
the following.
\begin{assumption}\label{assum:A22-bounded}
	If system~\eqref{eq:ddae} is strongly exponentially stable, there exists an
	$M$ such that, for all $N$ and $\omega$, we have the bound
	${\|A^{-1}_{N,22}(i\omega)\|}_F \le M$.
\end{assumption}
As the singular matrices form a set of measure zero, this is all but guaranteed
by strong stability and Lemma~\ref{lem:stronger-strn-stab-cond}, and appears to
hold in numerical experiments.

This then is sufficient to prove the final convergence result.
\begin{theorem}\label{thm:H2-convergence}
	For a strongly exponentially stable system~\eqref{eq:ddae} with finite strong
	$H^2$-norm, the method described in section~\ref{sec:h2ddae} converges, i.e.\
	$\norm{G_N}_{H^2} \to \norm{G}_{H^2}$ for $N \to \infty$, when the
	discretization preserves stability for $N$ sufficiently large and
	Assumptions~\ref{assum:conv-exp} through~\ref{assum:A22-bounded} hold.
\end{theorem}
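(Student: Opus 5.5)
We will prove the statement by a dominated-convergence argument on the imaginary axis. Since the discretization is stable for $N$ large, and Theorem~\ref{thm:no-feedthrough} gives $\odeD = \vb{0}$, both $\norm{G}_{H^2}^2$ and $\norm{G_N}_{H^2}^2$ reduce to integrals along the imaginary axis:
\[
	\norm{G_N}_{H^2}^2 = \frac{1}{2\pi}\int_{-\infty}^{\infty} \norm{G_N(i\omega)}_F^2 \dd{\omega}.
\]
Stability of~\eqref{eq:ddae} for the nominal delays places $i\RR$ away from the singularities of $G$. Stability of the discretization together with Theorem~\ref{thm:GN-conv} gives $G_N(i\omega) \to G(i\omega)$ pointwise, and in fact uniformly on every compact interval $[-\Omega,\Omega]$. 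It therefore suffices to find an integrable majorant $g(\omega) \ge \norm{G_N(i\omega)}_F^2$ that is independent of $N$. Then dominated convergence yields the convergence of the squared norms, and hence of the norms.

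To construct the majorant, we use the decomposition~\eqref{eq:GN-no-E}. We first show that the purely algebraic term $C_2 A_{N,22}^{-1}B_2$ vanishes identically. Exactly as in the proof of Theorem~\ref{thm:no-feedthrough}, Cayley--Hamilton writes $A_{N,22}^{-1}(s)$ as a polynomial in $A_{N,22}(s)=\sum_k A_{k,22}r_N(s,-\tau_k)$. Proposition~\ref{prop:sum-power} expands it into terms $C_2\opP_{\vb{k}}B_2$, and these vanish by Theorem~\ref{thm:strng-h2-charac}. What remains are terms each containing a factor $D_N^{-1}(i\omega)$, flanked by $C_1$ or $C_2A_{N,22}^{-1}A_{N,21}$ on the left and $B_1$ or $A_{N,12}A_{N,22}^{-1}B_2$ on the right. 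By Assumption~\ref{assum:A22-bounded}, $\norm{A_{N,22}^{-1}}_F\le M$. By Assumption~\ref{assum:rN-bounded}, $\norm{A_{N,ij}(i\omega)}_F \le c f(\omega)$ with $f=o(\abs{\omega}^{1/4})$. The outer factors are therefore $O(f(\omega))$, and the terms of the form $D_N^{-1}A_{N,12}A_{N,22}^{-1}A_{N,21}D_N^{-1}$ carry up to $f(\omega)^2$ from the $A_{N,ij}$.

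The key step is a uniform bound on $D_N^{-1}(i\omega)$. We have
\[
	D_N(i\omega) = i\omega I - \bigl(A_{N,11} + A_{N,12}A_{N,22}^{-1}A_{N,21}\bigr),
\]
and the bracket is $O(f(\omega)^2) = o(\abs{\omega}^{1/2})$ uniformly in $N$. Hence for $\abs{\omega}\ge\Omega_0$, with $\Omega_0$ independent of $N$, a Neumann series gives $\norm{D_N^{-1}(i\omega)}_2 \le 2/\abs{\omega}$. Collecting factors, the worst term behaves like $f^2\cdot\abs{\omega}^{-1}\cdot f^2 \cdot\abs{\omega}^{-1}$, up to constants. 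Squared, this gives a tail bound $\norm{G_N(i\omega)}_F^2 \le K f(\omega)^{8}\abs{\omega}^{-4}+\dots$, which is $o(\abs{\omega}^{-2})$ and thus integrable. This is where the exponent $\tfrac14$ in Assumption~\ref{assum:rN-bounded} is used. A careful accounting of the cross terms is needed here, since the $D_N^{-1}$ factor may appear only once while the outer factors contribute $f^2$ on each side.

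On the compact part $[-\Omega_0,\Omega_0]$ we use the uniform convergence from Theorem~\ref{thm:GN-conv}. The limit $G$ is continuous there because it has no poles on $i\RR$, so $\norm{G_N}_F^2$ is bounded by a constant for $N$ large. The main obstacle is the uniform-in-$N$ control at large $\abs{\omega}$, specifically the invertibility and $1/\abs{\omega}$ decay of $D_N$. Exactly this is handled by the Neumann-series argument, which in turn depends on Assumptions~\ref{assum:rN-bounded} and~\ref{assum:A22-bounded}, together with the exact vanishing of the algebraic feedthrough term. With the majorant constructed on both parts of the axis, dominated convergence concludes the proof.
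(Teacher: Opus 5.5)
Your route is the paper's own. You kill $C_2A_{N,22}^{-1}B_2$ via the Cayley--Hamilton argument and get $\norm{D_N^{-1}(i\omega)}_2\le 2/\abs{\omega}$ uniformly in $N$ from Assumptions~\ref{assum:rN-bounded} and~\ref{assum:A22-bounded}. You then bound the tails uniformly in $N$ and use Theorem~\ref{thm:GN-conv} on a compact interval; dominated convergence merely repackages the paper's $\delta/2$ split.

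However, the tail estimate, which is the step that carries the proof, is miscounted. No entry of the block matrix in \eqref{eq:GN-no-E} contains $D_N^{-1}$ twice, so the term $D_N^{-1}A_{N,12}A_{N,22}^{-1}A_{N,21}D_N^{-1}$ you invoke does not occur. The worst term is $C_2A_{N,22}^{-1}A_{N,21}D_N^{-1}A_{N,12}A_{N,22}^{-1}B_2$. It has a single factor $A_{N,ij}=O(f)$ on each side of one $D_N^{-1}$, not $f^2$ on each side as your closing remark suggests. The correct bound is therefore
\[
	\norm{G_N(i\omega)}_F \le K\, f(\omega)^2\abs{\omega}^{-1}, \qquad \norm{G_N(i\omega)}_F^2 \le K^2 f(\omega)^4\abs{\omega}^{-2},
\]
rather than $K f^8\abs{\omega}^{-4}$. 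Under your count even $f=O(\abs{\omega}^{1/3})$ would suffice, which signals that the exponent $\tfrac14$ is not being used where you say it is.

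With the correct count, $f=o(\abs{\omega}^{1/4})$ only gives $\norm{G_N(i\omega)}_F^2=o(\abs{\omega}^{-1})$, and that need not be integrable. For example, $f(\omega)=(\abs{\omega}/\log\abs{\omega})^{1/4}$ is admissible and yields the bound $1/(\abs{\omega}\log\abs{\omega})$. So your dominating function is not established as written.

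What is actually needed is integrability of $f(\omega)^4\abs{\omega}^{-2}$ at infinity. A polynomial margin $f(\omega)=O(\abs{\omega}^{1/4-\alpha/2})$ with $\alpha>0$ gives this. Then $\norm{G_N(i\omega)}_F=O(\abs{\omega}^{-1/2-\alpha})$ uniformly in $N$, which is exactly the decay the paper's proof asserts. The function $K\abs{\omega}^{-1-2\alpha}$ on the tails plus a constant on $[-\Omega_0,\Omega_0]$ is then an integrable majorant, and your dominated-convergence conclusion goes through.

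The paper's own passage from $o(\abs{\omega}^{1/4})$ to decay like $\abs{\omega}^{-1/2-\alpha}$ silently uses the same strengthening. You should state it explicitly rather than claim integrability from the $o(\cdot)$ bound.
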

\begin{proof}
	We start by bounding the tails of $G_N(i\omega)$ using the form in
	equation~\eqref{eq:GN-no-E}. Note that, as Assumption~\ref{assum:A22-bounded}
	ensures that $A_{N,22}(i\omega)$ is always invertible, we have
	$C_2 A^{-1}_{N,22}(i\omega) B_2 = \vb{0}$ through an analogous argument to the
	proof of Theorem~\ref{thm:no-feedthrough}. Additionally,
	Assumption~\ref{assum:rN-bounded} implies that $D_N^{-1}(i\omega)$ decays at
	least linearly for $\omega \to \infty$, uniformly in $N$. Using this decay and
	Assumptions~\ref{assum:rN-bounded} and~\ref{assum:A22-bounded}, we see that
	the remaining non-zero terms, and thus $G_N(i\omega)$, decay at least as
	$\abs{\omega}^{-\frac{1}{2}-\alpha}$ for some $\alpha > 0$. As $G(i\omega)$
	decays linearly for $\omega \to \infty$, we thus have that, for all
	$\gamma > 0$, there is some $\omega^* > 0$ such that for all
	$\abs{\omega} \ge \omega^*$ we can bound
	\[
		\max\Bqty{\norm{G(i\omega)}_F, \norm{G_N(i\omega)}_F} \le \gamma \abs{\omega}^{-\frac{1}{2}-\alpha}
	\]
	uniformly in $N$.

	Let $e_N(\omega) = \norm{G(i\omega)}_F^2 - \norm{G_N(i\omega)}_F^2$ and take
	some arbitrarily small $\delta > 0$. The uniform, linearly decaying bound on
	$G(i\omega)$ and $G_N(i\omega)$ then implies that there must be some
	$\omega^{**} > 0$ such that, for all $N$,
	\[\textstyle
		\abs{\frac{1}{2\pi} \int_{-\infty}^{-\omega^{**}} e_N(\omega) \dd{\omega} + \frac{1}{2\pi} \int_{\omega^{**}}^{\infty} e_N(\omega) \dd{\omega}} \le \frac{\delta}{2}.
	\]
	Furthermore, preservation of stability implies that there are no
	singularities on the imaginary axis for $N$ sufficiently large. Together
	with Assumption~\ref{assum:conv-exp}, we can thus apply
	Theorem~\ref{thm:GN-conv} and have $e_N \to 0$ as $N \to \infty$, uniformly
	on $[-\omega^{**}, \omega^{**}]$. Hence, there is some $N^*$ such that, for
	all $N > N^*$,
	\[\textstyle
		\abs{\frac{1}{2\pi} \int_{-\omega^{**}}^{\omega^{**}} e_N(i\omega) \dd{\omega}} \le \frac{\delta}{2}.
	\]
	Taking these two bounds together we have, for all $N \ge N^*$,
	\[\textstyle
		\abs{\frac{1}{2\pi} \int_{-\infty}^\infty e_N(i\omega) \dd{\omega}} \le \delta.
	\]

	Again invoking preservation of stability, there is some $N^{**} \ge N^*$
	such that $\norm{G_N}_{H^2} < \infty$ for all $N \ge N^{**}$. As $\delta$
	was arbitrary and using the positivity of norms we can, for all
	$\epsilon > 0$, choose a $\delta$ such that, for all $N \ge N^{**}$,
	\[\textstyle
		\abs{\norm{G}_{H^2} - \norm{G_N}_{H^2}} \le \epsilon.
	\]
\end{proof}
Note that for RDDEs the argument simplifies dramatically as
Assumption~\ref{assum:A22-bounded} becomes irrelevant. We then also have
$D_N(s) = sI_n - A_{N,11}(s)$ and $G_N(s) = C_1 D^{-1}_N(s) B_1$, thus
Assumption~\ref{assum:rN-bounded} can be relaxed to sublinear growth.

If the discretization were not to be stable, the proof still implies that the
$L^2$-norm along the imaginary axis converges, as long as the false poles are
not on the imaginary axis. From experience, however, the discretization does
seem to preserve stability, for $N$ sufficiently large, for most classical
choices of basis. This is not unreasonable to assume as similar results on the
preservation of stability have been proven for RDDEs by \citet{Ito1985,
	ItoKappel1992} when using a Legendre basis.

In fact, for this basis we additionally have Assumption~\ref{assum:max-in-zero}
by equation~18.14.16 in \citet{Boisvert2010} and, as the Lebesgue constants of
Legendre zeroes grow as $O(\sqrt{N})$ \citep[bottom of p.~338]{Szego1975}, we
have Assumption~\ref{assum:conv-exp} through Proposition~5.1 of
\citet{Breda2015}. Hence, up to preservation of stability and
Assumptions~\ref{assum:rN-bounded} and~\ref{assum:A22-bounded}, all conditions
required for Theorem~\ref{thm:H2-convergence} to hold are met when using these
polynomials.

As Legendre polynomials are symmetric on their domain, we have
\[
	\abs{r_N(i\omega, -\tau_m)} = 1
\]
for all $\omega \in \RR$ \citep[Proposition~5.4]{ProvoostMichiels2024}. In the
single delay case we thus trivially satisfy Assumption~\ref{assum:rN-bounded}
and have Assumption~\ref{assum:A22-bounded} from
Lemma~\ref{lem:stronger-strn-stab-cond}, if the system is strongly exponentially
stable. This leaves preservation of stability, which we establish for the single
delay case in the following result, in part inspired by the argument of
\citet{Ito1985}.
\begin{theorem}\label{thm:Legendre-single-delay-stable}
	For a strongly exponentially stable, single delay DDAE system~\eqref{eq:ddae},
	i.e.\ $m = 1$, the discretization~\eqref{eq:ddaediscr} preserves stability for
	$N$ sufficiently large, when using a Legendre basis.
\end{theorem}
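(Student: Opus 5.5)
Write $\tau=\tau_1$. By Proposition~\ref{prop:rat-exp}, the characteristic roots of the discretization~\eqref{eq:ddaediscr} are the zeros of
\[
\det\pqty{sE - A_0 - A_1 r_N(s,-\tau)},
\]
together with possibly spurious roots coming from the poles of $r_N(\,\cdot\,,-\tau)$. For a Legendre basis on $[-\tau,0]$, $r_N(s,-\tau)$ is a diagonal Padé-type approximant of $e^{-\tau s}$. Its poles lie in the open right half plane, and it satisfies $\abs{r_N(s,-\tau)} \le 1$ for $\Re s \ge 0$, with equality on the imaginary axis. This follows from the symmetry of the Legendre basis, cf.\ Proposition~5.4 of \citet{ProvoostMichiels2024}, and is inspired by \citet{Ito1985}. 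My plan is to show that for $N$ large no zero of this determinant, in the coordinates~\eqref{eq:stand-form}, lies in $\Re s \ge 0$. I treat bounded and unbounded $s$ separately.

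\textbf{Step 1 (the algebraic part never degenerates).} For $\Re s \ge 0$ we have $\abs{r_N(s,-\tau)}\le 1$, so $A_{N,22}(s) = -I_\nu + A_{1,22}\, r_N(s,-\tau)$. By Lemma~\ref{lem:stronger-strn-stab-cond}, $A_{N,22}(s)$ is invertible on the closed right half plane. Its inverse is uniformly bounded there by a constant $M$ independent of $N$ and $s$, since $\max_{\abs z\le1}\norm{(I-A_{1,22}z)^{-1}}$ is finite by compactness. This is Assumption~\ref{assum:A22-bounded} in the half-plane form. By the Schur complement, as in~\eqref{eq:GN-no-E}, any root with $\Re s\ge0$ must then be a zero of
\[
\det D_N(s), \qquad D_N(s) = sI - A_{N,11}(s) - A_{N,12}(s)A_{N,22}^{-1}(s)A_{N,21}(s).
\]

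\textbf{Step 2 (large $\abs s$).} On $\Re s\ge 0$ the matrix $A_{N,11}+A_{N,12}A_{N,22}^{-1}A_{N,21}$ is bounded by a constant $K$ independent of $N$, using $\abs{r_N}\le1$ and Step~1. Hence $D_N(s)$ is invertible whenever $\abs s > K$. All candidate unstable roots therefore lie in the compact set $\Omega = \{\Re s \ge 0,\ \abs s\le K\}$, uniformly in $N$.

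\textbf{Step 3 (compact region).} On $\Omega$, strong exponential stability, through Theorem~\ref{thm:strng-stab-charac} applied with $\alpha(\tau)<0$, gives $\det\pqty{sE-A_0-A_1e^{-\tau s}}\ne0$. Equivalently, in the standard form, the function $\det D(s)$ with $D$ the exact analogue of $D_N$ has no zeros on $\Omega$, and $\abs{\det D}$ is bounded below there by continuity and compactness. By Assumption~\ref{assum:conv-exp}, which holds for Legendre by the Lebesgue constant argument above, $r_N(s,-\tau)\to e^{-\tau s}$ uniformly on $\Omega$. Combining this with the uniform bound from Step~1 and Lemma~\ref{lem:conv-inv}, $D_N\to D$ uniformly on $\Omega$, so $\det D_N\ne0$ on $\Omega$ for $N$ large.

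\textbf{Main obstacle.} The delicate point is the spurious roots. I must make sure that poles of $r_N$ in the right half plane cannot create characteristic roots of the full pencil $(\opE_N,\opA_N)$ that are invisible in the determinant formula. This needs the exact correspondence between eigenvalues of the pencil and zeros of $\det\pqty{sE-A_0-A_1r_N(s,-\tau)}$, multiplied by the denominator of $r_N$. I would argue it via Proposition~\ref{prop:rat-exp}: at a pole $s_0$ of $r_N$, the denominator of $r_N$ vanishes, and one checks directly, as in \citet{Ito1985}, that $s_0$ is not an eigenvalue since $\phi_N(0)\ne0$. Alternatively, I would bound the modulus of the numerator of $r_N$ against its denominator to show that these points contribute no extra roots. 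I would also verify the half-plane bound $\abs{r_N}\le1$ via the maximum principle, using the boundary equality $\abs{r_N(i\omega,-\tau)}=1$ and the location of the poles.
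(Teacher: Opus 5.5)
Your Steps~1--3 follow the paper's proof exactly. The half-plane bound $\abs{r_N(s,-\tau)}\le 1$ and Lemma~\ref{lem:stronger-strn-stab-cond} bound $A_{N,22}^{-1}$ on $\Re(s)\ge 0$ uniformly in $N$. This confines the right half-plane zeros of $\det D_N$ to a fixed hemicircle, where uniform convergence of $D_N$ to $D$ excludes them for large $N$.

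The gap is in how you justify that half-plane bound, on which everything rests. Your claim that the poles of $r_N(\,\cdot\,,-\tau)$ lie in the open right half-plane is false. It also contradicts the bound you then use, since $r_N$ would be unbounded near such poles. Symmetry, i.e.\ Proposition~5.4 of \citet{ProvoostMichiels2024}, does not fix this. It only gives $\abs{r_N(i\omega,-\tau)}=1$ on the imaginary axis, which $(s+1)/(s-1)$ also satisfies while being unbounded in $\Re(s)>0$. What is needed, and what the paper uses, is the following. For a Legendre basis, $r_N(s,-\tau)$ is \emph{exactly} the $N$th diagonal Pad\'e approximant of $e^{-\tau s}$ (Theorem~4.2 of \citet{ProvoostMichiels2024}). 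By Lemma~8 of \citet{Birkhoff1965}, these approximants have modulus at most one on the closed right half-plane. Equivalently, their poles lie in the open \emph{left} half-plane; for $N=1$ one gets $(1-\tau s/2)/(1+\tau s/2)$, with pole $-2/\tau$. With the correct pole location, your maximum-principle route also works, since $\abs{r_N}$ tends to one at infinity.

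This also settles your ``main obstacle'', but not in the way you suggest. For $\Re(s)\ge 0$ the map $\xi\mapsto(\ope_0\xi,(s\opT_{N-1}-\opD)\xi)$ on $\PP_N^n$ is invertible, because its singular points are the poles of $r_N(\,\cdot\,,-\tau)$. So every null vector of $s\opE_N-\opA_N$ there has the form $\vb{r}_N(s)\vb{c}$ with $(sE-A_0-A_1r_N(s,-\tau))\vb{c}=\vb{0}$. Hence the determinant formula captures all roots in the closed right half-plane.

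Your alternative claim, that a pole $s_0$ of $r_N$ is never an eigenvalue of the pencil because $\phi_N(0)\neq 0$, is wrong. At such $s_0$ there is a nonzero $q\in\PP_N$ with $q(0)=0$ and $\opD q=s_0\opT_{N-1}q$. Then $q\vb{c}$ with $\vb{c}\in\ker A_1$ is a null vector of $s_0\opE_N-\opA_N$. Since $A_1$ is singular in most DDAE formulations (see appendix~\ref{apx:ddae-forms}), such spurious eigenvalues do occur. They are harmless only because they lie in the open left half-plane, which is exactly the fact you had reversed.
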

\begin{proof}
	From Theorem~4.2 of \citet{ProvoostMichiels2024} we know that, for this choice
	of basis, $r_N(s, -\tau_m)$ is the $N$th diagonal Padé approximant of
	$s \mapsto e^{-\tau_m s}$. \Citet[Lemma~8]{Birkhoff1965} showed of these
	approximations that they are bounded in magnitude by one on the closed right
	half-plane, i.e.\ $\abs*{r_N(s, -\tau_m)} \le 1$ for all
	$s \in \CC_{\ge 0} = \{z \in \CC : \Re(z) \ge 0\}$.

	Together with Lemma~\ref{lem:stronger-strn-stab-cond}, this bound on $r_N$
	implies that $A_{N,22}^{-1}(s)$ is bounded for $s \in \CC_{\ge 0}$. From
	equation~\eqref{eq:GN-no-E} we then conclude that all characteristic roots
	$\lambda$ of the discretization in this half-plane statisfy
	$\det(D_N(\lambda)) = 0$ and thus
	\[
		\abs{\lambda} \le \norm*{A_{N,11}(\lambda) + A_{N,12}(\lambda) A^{-1}_{N,22}(\lambda) A_{N,21}(\lambda)}_2.
	\]
	The bound on $r_N$ further implies that $A_{N,11}$, $A_{N,12}$, and
	$A_{N,21}$ are bounded on $\CC_{\ge 0}$. Thus, the zeroes of
	$\det(D_N(\lambda))$ that lie in $\CC_{\ge 0}$ are bounded in magnitude by
	some finite constant $M$ and are thus constrained to a hemicircle~$\Omega$
	of radius~$M$.

	As the original system is stable, it has no characteristic roots in
	$\Omega$. From the argument for Theorem~\ref{thm:GN-conv}, we know that
	$D_N(s)$ converges on compact sets. Hence, for $N$ sufficiently large, the
	discretized system cannot have any characteristic roots in $\Omega$ and thus
	has none in the entire closed right half-plane.
\end{proof}

Using this theorem and the preceding discussion we finally have.
\begin{corollary}\label{cor:Legendre-H2-convergence}
	For a strongly exponentially stable, single delay system~\eqref{eq:ddae} with
	finite strong $H^2$-norm, the method described in section~\ref{sec:h2ddae}
	converges, i.e.\ $\norm{G_N}_{H^2} \to \norm{G}_{H^2}$ for $N \to \infty$,
	when using Legendre polynomials as basis.
\end{corollary}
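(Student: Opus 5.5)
This follows by verifying the hypotheses of Theorem~\ref{thm:H2-convergence} one at a time for the single-delay Legendre case; no new estimates are needed. A finite strong $H^2$-norm implies strong exponential stability, which is part of the assumption anyway. Thus Proposition~\ref{prop:A22-inv} and Theorem~\ref{thm:no-feedthrough} apply, provided Assumption~\ref{assum:max-in-zero} holds. This makes the method of section~\ref{sec:h2ddae} well defined and free of feedthrough.

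The items to verify, in order, are as follows.
\begin{enumerate}
	\item \emph{Assumption~\ref{assum:max-in-zero}.} For Legendre polynomials, $\abs{\phi_N}$ attains its maximum on the interval at the endpoints (equation~18.14.16 of \citet{Boisvert2010}). Hence $\abs{\phi_N(-\tau_1)} \le \abs{\phi_N(0)}$; by symmetry this is in fact an equality.
	\item \emph{Assumption~\ref{assum:conv-exp}.} The Lebesgue constants of the Legendre zeroes grow as $O(\sqrt N) = o(N)$. Proposition~5.1 of \citet{Breda2015} then gives the assumption.
	\item \emph{Assumption~\ref{assum:rN-bounded}.} For $k = 0$ we have $r_N(s,0) = 1$. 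For $k = m = 1$, symmetry of the basis gives $\abs{r_N(i\omega,-\tau_1)} = 1$ by Proposition~5.4 of \citet{ProvoostMichiels2024}. So the constant function $f \equiv 1 = o(\abs{\omega}^{1/4})$ is a valid bound, uniform in $\omega$ and $N$.
	\item \emph{Preservation of stability.} This is exactly Theorem~\ref{thm:Legendre-single-delay-stable}.
\end{enumerate}

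The step needing the most care is Assumption~\ref{assum:A22-bounded}. With $m = 1$ we have
\[
	A_{N,22}(i\omega) = -I_\nu + A_{1,22}\, r_N(i\omega,-\tau_1), \qquad z := r_N(i\omega,-\tau_1) \in \partial\DD.
\]
Lemma~\ref{lem:stronger-strn-stab-cond} gives $\rho(A_{1,22} z) < 1$ on the compact set $\bar\DD$, so $-I_\nu + A_{1,22} z$ is invertible for every $z \in \bar\DD$. The map $z \mapsto \norm{(-I_\nu + A_{1,22} z)^{-1}}_F$ is continuous on $\bar\DD$, hence attains a finite maximum $M$. Since every value $r_N(i\omega,-\tau_1)$ lies in $\bar\DD$, this yields the bound uniformly in $N$ and $\omega$. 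The key point is that the bound depends on $(N,\omega)$ only through $z$, which ranges over a fixed compact set; this is what removes the dependence on the discretization.

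With all hypotheses verified, Theorem~\ref{thm:H2-convergence} gives $\norm{G_N}_{H^2} \to \norm{G}_{H^2}$ as $N \to \infty$.
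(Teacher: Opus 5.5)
Your proposal is correct and follows the paper's own argument: it verifies Assumptions~\ref{assum:max-in-zero}--\ref{assum:A22-bounded} for the Legendre basis (endpoint maximum of $\abs{\phi_N}$, $O(\sqrt{N})$ Lebesgue constants of the Legendre zeroes, and $\abs{r_N(i\omega,-\tau_1)}=1$ combined with Lemma~\ref{lem:stronger-strn-stab-cond}), then takes stability preservation from Theorem~\ref{thm:Legendre-single-delay-stable} and applies Theorem~\ref{thm:H2-convergence}. Your compactness argument over $\bar\DD$ for Assumption~\ref{assum:A22-bounded} simply spells out the step the paper states in one line.
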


The reason we could provide a complete argument for this case mainly hinges on
the boundedness of $r_N(s, -\tau_m)$. In
\citet[subsection~3.1]{ProvoostMichiels2025} it is shown that, for a spline
based discretization with its knots on the delays, the more general
$\abs{r_N(i\omega, -\tau_k)} = 1$ holds for all delays $\tau_k$ and
$\omega \in \RR$, if the underlying polynomials are symmetric on their domains.
This suggests that this corollary could be extended to multiple delays when
using a spline based discretization, as we shall detail in
section~\ref{sec:spline}.

Having established conditions for convergence, we conclude this subsection
discussing the convergence rate. In the case of retarded type systems,
\citet{Vanbiervliet2011} report third order algebraic convergence of
$\norm{G_N}_{H^2}$ to $\norm{G}_{H^2}$ and provide partial arguments for this
observation. Whilst similar derivations are likely possible for our extension,
we limit ourselves to an empirical study of the typical convergence rates seen
for different families of systems that can be modelled as
system~\eqref{eq:ddae}.

For our illustrations we shall use the following group of systems
\begin{equation}\label{eq:conv-sys}
	\begin{aligned}
		\dot{\vb{x}}(t) & = \begin{aligned}[t]
			                    \spmqty{-5 & 1                                                       \\ 3 & 8} \vb{x}(t)
			                               & {}+{} \delta_{r_1}\spmqty{-2            & 0             \\ -2 & 1} \vb{x}(t - 1) -
			                    \delta_{r_2} I_2 \vb{x}(t - 1.9)                                     \\
			                               & {}+{} \delta_{n_1}\spmqty{-\sfrac{1}{5} & \sfrac{1}{10} \\ 0 & -\sfrac{1}{10}} \dot{\vb{x}}(t - 1) -
			                    \tfrac{1}{10} \delta_{n_2} I_2 \dot{\vb{x}}(t - 1.9) +
			                    \spmqty{1                                                            \\ 1} v(t),
		                    \end{aligned}                \\
		z(t)            & = \spmqty{1                                                                                                                                                       & 1} \vb{x}(t),
	\end{aligned}
\end{equation}
where
$\vec{\delta} = (\delta_{r_1}, \delta_{r_2}; \delta_{n_1}, \delta_{n_2}) \in \{0, 1\}^4$
is used to determine the system's composition (see appendix~\ref{apx:ddae-forms}
for the DDAE formulation). As for this choice the single delay case is proven to
converge (see Corollary~\ref{cor:Legendre-H2-convergence}), we use the
appropriately shifted and scaled $N$th Legendre polynomial $P^*_N$ as $\phi_N$
for our discretization. Furthermore, remember that the Legendre polynomials are
symmetric on their domain, which is the property that seems to explain the
geometric convergence observed for the $H^2$-norm of single delay RDDEs in
previous work \citep{ProvoostMichiels2024}.

\begin{figure}
	\centering
	\includegraphics[width=.7\textwidth]{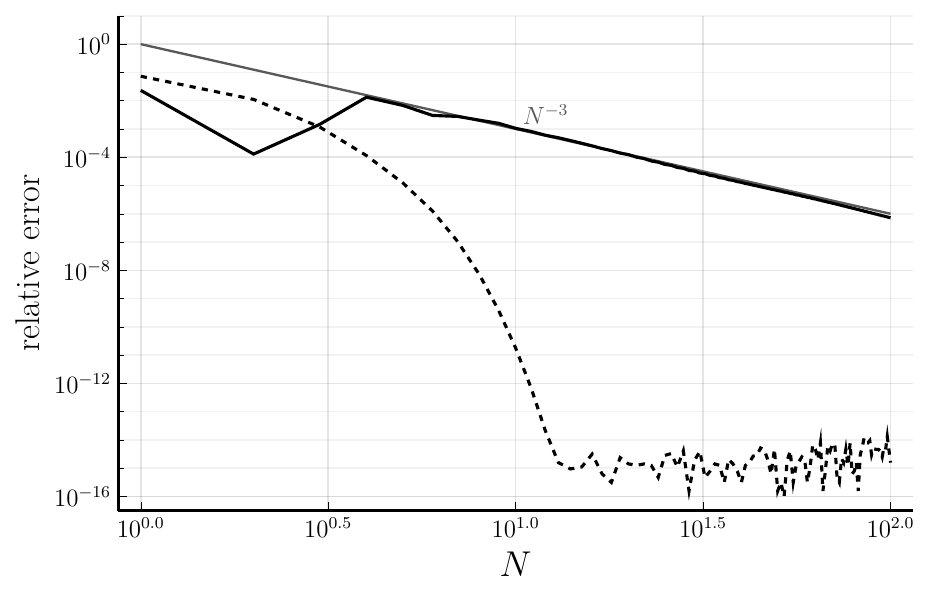}
	\caption{\label{fig:conv-rdde} Convergence rate for different RDDE
		configurations of system \eqref{eq:conv-sys}. The dashed curve corresponds
		to the single delay case, $\vec{\delta} = (1, 0; 0, 0)$, the solid curve
		to two delays, $\vec{\delta} = (1, 1; 0, 0)$.}
\end{figure}

\begin{figure}
	\centering
	\includegraphics[width=.7\textwidth]{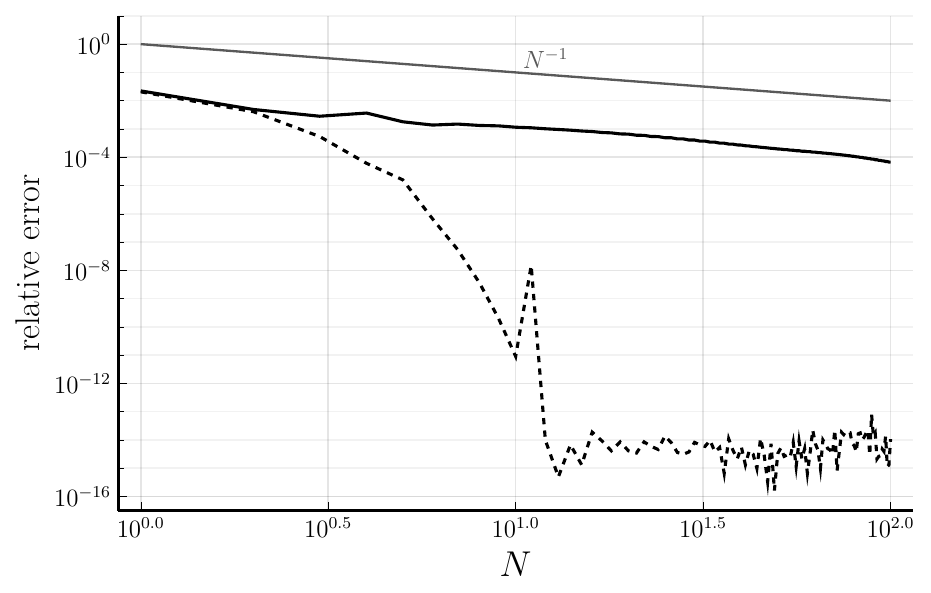}
	\caption{\label{fig:conv-ndde} Convergence rate for different NDDE
		configurations of system \eqref{eq:conv-sys}. The dashed curve has a
		single delay, $\vec{\delta} = (0, 0; 1, 0)$, the solid curve two,
		$\vec{\delta} = (0, 0; 1, 1)$.}
\end{figure}

Hence, for systems of retarded type, we expect third order algebraic convergence
for $m > 1$ and geometric convergence for $m = 1$, which is confirmed on
figure~\ref{fig:conv-rdde}. As systems of neutral type have vertical chains of
infinitely many characteristic roots parallel to the imaginary axis and the
$H^2$-norm involves an integral along this axis, a reduction in convergence rate
is expected for such systems. Indeed, on figure~\ref{fig:conv-ndde} we see a
reduction to first order algebraic convergence for multiple delays.
Surprisingly, in the single delay case we maintain the geometric convergence
seen for RDDEs. As reported in \citet{ProvoostMichiels2024}, we believe that
this is due to the fact that symmetric $\phi_N$ imply
$\abs*{r_N(i\omega, -\tau_m)} = 1 = \abs*{e^{-\tau_m i\omega}}$ for all
$\omega \in \RR$, something that is not, in general, true at the interior
delays.

\begin{figure}
	\centering
	\includegraphics[width=.7\textwidth]{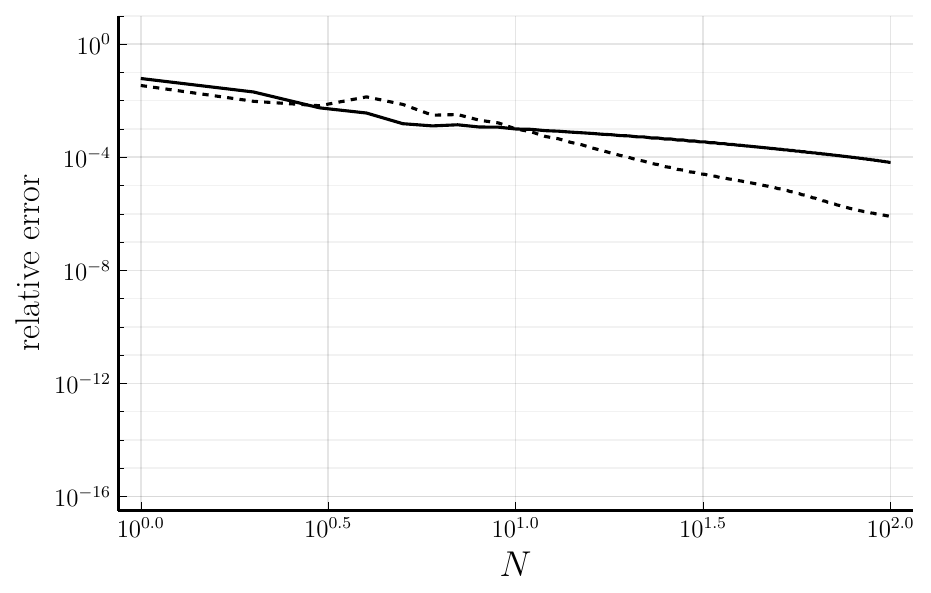}
	\caption{\label{fig:conv-mixed} Convergence rate for different mixed
		configurations of system \eqref{eq:conv-sys}. The dashed curve has a
		neutral term on the maximal delay, $\vec{\delta} = (1, 0; 0, 1)$, the
		solid curve on the interior delay, $\vec{\delta} = (0, 1; 1, 0)$.}
\end{figure}

This reduced effect of the type of the maximal delay $\tau_m$ on the convergence
rate seems to hold even when $m > 1$. On figure~\ref{fig:conv-mixed} we see that
the algebraic convergence rate is only determined by the type of the inner
delays. If any of those correspond to terms that render the equation of neutral
type, we reduce to first order convergence, else we maintain third order
convergence.

\section{$H^2$-optimal synthesis}\label{sec:control}
The goal of our work so far has been an effective method to compute the
$H^2$-norm of a DDAE\@. An additional aim is to use this method to synthesize
$H^2$-optimal fixed-order feedback controllers and stable approximate
models through direct optimization of the model's parameters. In the first
application, system~\eqref{eq:ddae} stems from the feedback interconnection of a
plant and a dynamic or static feedback controller. In the second,
system~\eqref{eq:ddae} describes the error between the dynamics of the full and
the approximate model (see example~\ref{ex:rdde-3}). As a consequence, some
elements of the matrices $A_0$, \dots, $A_m$, $B$, and $C$ may be equal to, or
depend on, the controller or model's parameters.

This motivates the sensitivity analysis in subsection~\ref{ssec:deriv} of the
$H^2$-norm with respect to changes of these matrices, such that we can use a
gradient based optimization method. Likewise, the application to delay based
controllers \citep[see e.g.][]{Villafuerte2013}, where intentional delays are
used to improve the control performance, motivates the study of delay
sensitivity as well.

Finally, in subsection~\ref{ssec:examples} we illustrate our method using a
number of applications, both to robust control as to the synthesis of stable
approximate models.

\subsection{Computing the gradient}\label{ssec:deriv}
In this subsection we discuss the computation of the derivative of the square of
the $H^2$-norm with respect to system parameters. As a norm is always positive,
minimizing its square also minimizes the norm.

The $H^2$-norm of a system of the form~\eqref{eq:ddae} may not be everywhere
differentiable with respect to a parameter $p$. However, if the system matrices
are smooth functions of $p$, we keep $E$ fixed, and we assume the system to both
have differentiation index one (see Proposition~\ref{prop:idx1}) and satisfy the
conditions of Theorem~\ref{thm:strng-h2-charac} in a neighbourhood of $p$, its
square is a locally smooth function of $p$. If the strong $H^2$-norm is finite,
then its square is also a smooth function of the delays. Hence, if the
conditions of Theorem~\ref{thm:no-feedthrough} are satisfied in a neighbourhood
of the parameters, the required derivatives are well defined.

Approximating the $H^2$-norm once, using the above approach, takes
$O(n^3N^3)$ elementary operations when using a direct solver for the
Lyapunov equation, such as the one by \citet{Hammarling1982}. Using finite
differences we could then approximate the gradient with respect to the free
parameters, for use in an optimization loop, in $O((r+1)n^3N^3)$ time,
where $r$ is the number of parameters.

An alternative method, with a time complexity independent of $r$, obtains
analytical expressions for the derivatives using an adjoint method, by solving
the dual Lyapunov equation. Hence, we can compute the gradient in a little over
double the computational time of only computing the $H^2$-norm itself. A
disadvantage of this approach is that we compute the derivative of the
discretization's $H^2$-norm and not of the original system. When used in an
optimization loop, some care must thus be taken to reject updates that increase
the differentiation index beyond one or blow up the strong $H^2$-norm.

The approach is analogous to the formulas in \citet{Vanbiervliet2009}, however
we opt to use an alternative derivation using matrix differentials, as
introduced by \citet{MagnusNeudecker1985}, which makes the calculus a bit less
tedious. Throughout, we will assume the convention
$\pqty{\dv{f}{X}}_{ij} = \pdv{f}{x_{ij}}$.

Let $\opL P = \odeA P \odeE^T + \odeE P \odeA^T$, then we can write the primary
and dual Lyapunov equations as
\begin{align*}
	\ell_P & \coloneq \opL P + \odeB\odeB^T = \vb{0},\quad\text{and} \\
	\ell_Q & \coloneq \opL^T Q + \odeC^T \odeC = \vb{0},
\end{align*}
respectively.\footnote{Note that indeed
	$\opL^T Q = \odeA^T Q \odeE + \odeE^T Q \odeA$ as
	$\opL^T \cong {(\odeE \otimes \odeA + \odeA \otimes \odeE)}^T$.}

Let $X$ be some variable independent of $P$ and $Q$. Assuming everything but
$P$ and $X$ fixed, we get from the top equation the following linear relation
between infinitesimal perturbations of $P$ and $X$ and the resulting
infinitesimal perturbation of $\ell_P$, written in terms of differentials this
is
\[
	\dd{\ell_P} = \pdv{\ell_P}{P}\dd{P} + \pdv{\ell_P}{X}\dd{X} = \opL\dd{P} + \pdv{\ell_P}{X}\dd{X} = \vb{0},
\]
which implies
\[
	\dd{P} = -\opL^{-1}\pdv{\ell_P}{X} \dd{X}.
\]
We can assume the existence of $\opL^{-1}$ as we only consider this relation
when the solutions $P$ and $Q$ exist.

Rewriting $\ell_Q = \vb{0}$ we get $Q = -\opL^{-T} \odeC^T \odeC$, thus, using
$Q = Q^T$,
\[
	\odeC^T \odeC \dd{P} = Q \pdv{\ell_P}{X} \dd{X}.
\]

Now note that
$\dd{\norm{G_N}^2_{H^2}} = \tr(\odeC \dd{P} \odeC^T) = \tr(\odeC^T \odeC\dd{P})$, hence
\[
	\dd{\norm{G_N}^2_{H^2}} = \tr(Q \pdv{\ell_P}{X} \dd{X}).
\]
Using analogous reasoning, we find from $\norm{G_N}^2_{H^2} = \tr(\odeB^T Q \odeB)$ that
\[
	\dd{\norm{G_N}^2_{H^2}} = \tr(P \pdv{\ell_Q}{X} \dd{X}).
\]
The required derivatives are finally found from simple, but tedious,
computations detailed in appendix~\ref{apx:deriv} and the fact that
$\dd{f} = \tr(Y^T \dd{X})$ implies $\dv{f}{X} = Y$. We have
\begin{gather*}
	\dv{\norm{G_N}^2_{H^2}}{A_k} = 2\spmqty{I_n \\ \vb{0}}^T \opD_{\mathcal{S}}^T [\ope_{-\tau_k}]^T, \\
	\dv{\norm{G_N}^2_{H^2}}{\tau_k} = -2\tr(A_k [\ope_{-\tau_k}\opD] \opD_{\mathcal{S}} \spmqty{I_n \\ \vb{0}}) \qq{for} k < m, \\
	\dv{\norm{G_N}^2_{H^2}}{\tau_m} = -2\tr(\tfrac{1}{\tau_m} [\opD] \opD_{\mathcal{S}} \spmqty{\vb{0} \\ I_{nN}}) - \sum_{k =0}^{m-1} \frac{\tau_k}{\tau_m} \dv{\norm{G_N}^2_{H^2}}{\tau_k}, \\
	\dv{\norm{G_N}^2_{H^2}}{B} = 2\spmqty{I_n \\ \vb{0}}^T Q_U^T \odeB, \qq{and}
	\dv{\norm{G_N}^2_{H^2}}{C} = 2\odeC P_V^T [\ope_0]^T,
\end{gather*}
where $Q_U = Q (U^\perp - \opA_{12}\opA_{22}^{-1}U)$,
$P_V = (V^\perp - V\opA_{22}^{-1}\opA_{21}) P$, $W_{\!\opA} = V\opA_{22}^{-1}U$,
and
$\opD_{\mathcal{S}} = P_V\odeE^T Q_U - W_{\!\opA} \opB_N\odeB^TQ_U - P_V \odeC^T
	\opC_N W_{\!\opA}$.

\subsection{Numerical examples}\label{ssec:examples}
We conclude this section by illustrating how the approach we developed so far
can be used to synthesize robust controllers and $H^2$-optimal reduced models.
As optimization strategy we use the Broyden--Fletcher--Goldfarb--Shanno
quasi-Newton method \citep[see e.g.][section~6.1]{NocedalWright2006} with the
line search algorithm by \citet{HagerZang2006}. We compute the gradient using
the formulas of the previous subsection. For a step that would invalidate the
conditions for the existence of the gradient outlined in
subsection~\ref{ssec:deriv}, we take the cost function to be infinite such that
the line search algorithm will naturally guarantee those conditions. For the
same reasons as given in subsection~\ref{ssec:conv}, we choose $\phi_N = P^*_N$,
the appropriately scaled and shifted $N$th Legendre polynomial, for our
discretization and take $N = 40$.

For simplicity, we shall introduce the examples in their original form, the
(mostly mechanistic) transformation into a DDAE can be found in
appendix~\ref{apx:ddae-forms}. We first verify our method by reproducing
examples 1--3 from \citet{Gomez2019}, in order.

\begin{example}\label{ex:rdde-1}
	Consider the following closed-loop system from \citet{Vanbiervliet2008}:
	\begin{align*}
		\dot{\vb{x}}(t) & =
		\spmqty{
		-0.08           & -0.03        & 0.2    \\
		0.2             & -0.04        & -0.005 \\
		-0.06           & -0.2         & -0.07
		}
		\vb{x}(t) + \spmqty{-0.1                \\ -0.2 \\ 0.1}\spmqty{p_1&p_2&p_3} \vb{x}(t - 5) + \vb{v}(t), \\
		\vb{z}(t)       & = \vb{x}(t).
	\end{align*}
	Starting from the stabilizing parameters $p_1 = 0.472$, $p_2 = 0.505$, and
	$p_3 = 0.603$, which yield a $H^2$-norm of approximately $8.91$, we find
	$p^*_1 \approx 0.538$, $p^*_2 \approx 0.338$, and $p^*_3 \approx 0.226$
	giving a much lower $H^2$-norm of about $5.70$. All identical to the values
	reported by \citet{Gomez2019}.
\end{example}
\begin{example}\label{ex:rdde-2}
	The next example concerns the design of a proportional-retarded controller for
	a DC servomechanism \citep{Villafuerte2013}:
	\begin{align*}
		\dot{\vb{x}}(t) & = \spmqty{0 & 1            \\-\nu^2-bk_p&-2\delta\nu}\vb{x}(t) + \spmqty{0&0\\bk_r&0} \vb{x}(t-\tau) + \spmqty{0\\b}v(t), \\
		z(t)            & = \spmqty{1 & 0}\vb{x}(t),
	\end{align*}
	where $\nu = 17.6$, $\delta = 0.0128$, and $b = 31$, leaving $\tau$, $k_p$,
	and $k_r$ as control parameters.

	As in \citet{Gomez2019}, optimizing for all three parameters yields no
	finite optimizer as we see $\tau \to 0$, $k_p \to -\infty$, and
	$k_r \to \infty$. Fixing $k_p = 22.57$ as suggested, and starting
	optimization from $\tau = 0.03$ and $k_r = 3$, yields
	$\tau^* \approx 0.0519$ and $k_r^* \approx 17.964$, improving the $H^2$-norm
	from about $0.465$ to $0.223$, values identical to those obtained by
	\citet{Gomez2019}.

	Note that, since we can treat delay optimization directly, the formulation
	of the problem is significantly easier for our method as we did not need a
	change of variables to optimize for $\tau$.
\end{example}
\begin{example}\label{ex:rdde-3}
	Consider the following exponentially stable linearized model of a refinement
	plant by \citet{Ross1971}:
	\begin{align*}
		\dot{\vb{x}}(t) & = A_0\vb{x}(t) + A_1\vb{x}(t-0.1)+B\vb{u}(t), \\
		y(t)            & = C\vb{x}(t),
	\end{align*}
	where
	\begin{gather*}
		A_0 = \spmqty{-4.93&-1.01&0&0\\
			-3.20&-5.30&-12.8&0\\
			6.40&0.347&-32.5&-1.04\\
			0&0.833&11&-3.96},\quad
		A_1 = \spmqty{1.92&0&0&0\\0&1.92&0&0\\0&0&1.87&0\\0&0&0&0.724},\\
		B = \spmqty{1&0\\0&1\\0&0\\0&0},\qq{and}
		C = \spmqty{1&1&1&1}.
	\end{gather*}
	We wish to find a stable approximate model of dimension two,
	\begin{align*}
		\dot{\vb{x}}_r(t) & = A_{0r}\vb{x}_r(t) + A_{1r}\vb{x}_r(t-0.1)+B_r\vb{u}(t), \\
		y_r(t)            & = C_r\vb{x}_r(t),
	\end{align*}
	with minimal $L^2$-error on the transfer function along the imaginary axis.
	We can do so by minimizing the $H^2$-norm of the following system,
	optimizing for $A_{0r}$, $A_{1r}$, $B_r$, and $C_r$,
	\begin{align*}
		\dot{\vb{x}}_e(t) & = \spmqty{A_0 & \vb{0}            \\\vb{0}&A_{0r}} \vb{x}_e(t) + \spmqty{A_1&\vb{0}\\\vb{0}&A_{1r}} \vb{x}_e(t-0.1) + \spmqty{B\\B_r} \vb{u}(t), \\
		y_e(t)            & = \spmqty{C   & -C_r}\vb{x}_e(t).
	\end{align*}

	Starting from the stabilizing matrices
	\[
		A_{0r} = \spmqty{-3&-1\\-3&-2},\quad
		A_{1r} = \spmqty{1&0\\2&0},\quad
		B_r = \spmqty{1.6&0.3\\0.15&0.7},\qq{and}
		C_r = \spmqty{0.7&-0.7},
	\]
	the $H^2$-norm after optimization is approximately $5.91 \cdot 10^{-3}$
	which is near identical to the error on the model obtained in
	\citet{Gomez2019}.
\end{example}

Next, we look at two neutral systems of increasing complexity.

\begin{example}\label{ex:ndde-1}
	We consider the following example, inspired by example~6.2.2 of
	\citet{Byrnes1984}:
	\begin{align*}
		\dot{x}(t) & = -x(t) + x(t - 1) + p_1\dot{x}(t - 1) + p_2x(t - 1) + v(t), \\
		z(t)       & = x(t).
	\end{align*}
	For $p_1 = 0$ and $p_2 = -1$ the authors show this system to be stable. For
	this choice of parameters, the $H^2$-norm is approximately $0.71$. Using our
	method, we find an alternative controller with $p^*_1 \approx -0.27$ and
	$p^*_2 \approx -1.50$, yielding a $H^2$-norm of about $0.66$.
\end{example}

\begin{example}\label{ex:ndde-2}
	A more interesting system---which also illustrates how one can use slack
	variables in a DDAE formulation to obtain higher order derivatives---stems
	from a linear oscillator under delayed acceleration-derivative feedback with
	additive noise on the measured acceleration and derivative:
	\begin{align*}
		\ddot{x}(t) + 2\xi\dot{x}(t) + x(t) & = p_1(\ddot{x}(t - \tau_1) + v_1(t)) + p_2(\dot{x}(t - \tau_2) + v_2(t)), \\
		z(t)                                & = x(t).
	\end{align*}
	From the stability chart on figure~1 of \citet{Wang2017} we see that for
	$\xi = 0.2$, $p_1 = 0.5$, $p_2 = -20$, $\tau_1=0.2$, and $\tau_2=0.1$ the
	zero solution is exponentially stable, as $\abs{p_1} < 1$ it is also
	strongly stable. We find a $H^2$-norm of about $3.23$ for this setup.
	Optimizing for $p_2$ we can improve this to about $0.57$ by taking
	$p^*_2 \approx -0.33$. Additionally optimizing for $\tau_1$, lower bounding
	it to $\tau_2 = 0.1$, yields $p^{**}_2 \approx -0.28$ and
	$\tau^{**}_1 \approx 0.1$ with a slightly improved $H^2$-norm of
	approximately $0.53$.
\end{example}

\section{Spline based Lanczos tau methods}\label{sec:spline}
At the end of subsection~\ref{ssec:conv} we proved convergence for the specific
case of a system with a single delay, when discretized using Legendre
polynomials (Corollary~\ref{cor:Legendre-H2-convergence}). The argument mainly
relied on the fact that, for a single delay, the rational approximation of the
exponential underlying this discretization has magnitude one on the imaginary
axis. We noted that results of \citet{ProvoostMichiels2025} imply that such a
property holds at all discrete delays when using a discretization based on a
spline.

In this section we sketch how this property can be used to adapt the theory
developed in the preceding sections to splines. We conclude by illustrating that
the acceleration in convergence rate that motivated the use of splines in the
case of retarded type systems also extends to our more general setting.

Let $\Xi_{\,tN}$ be a set of polynomials
$\{\xi^{(k)}_{tN} : [-\tau_k, -\tau_{k-1}] \to \CC^n \}_{k=1}^m$, each of degree
$N$, i.e.\ a spline with its knots at the delays. Analogous to
subsection~\ref{ssec:ddaediscr}, we can discretize system~\eqref{eq:ddae} by
expanding each $\xi_{tN}^{(k)}$ in an orthogonal, degree-graded basis
$\{\phi^{(k)}_{j} : [-\tau_k, -\tau_{k-1}] \to \CC\}_{j=0}^N$. Note, however,
that this leaves us with an underdetermined system. We complete the discretization
by requiring continuity at the knots, i.e.
$\xi_{tN}^{(k)}(-\tau_k) = \xi_{tN}^{(k+1)}(-\tau_k)$ for $k = 1,\dots,m-1$,
giving us
\begin{equation}\label{eq:spl-discr}
	\begin{aligned}
		\pmqty{E\ope^{(1)}_0                      \\[0.3em] \pqty{\opT^{(k)}_{N-1}}_{\!k=1}^{\!m} \\[0.8em] \pqty{\vb{0}}_{k=1}^{m-1}} \dot\Xi_{\,tN} &= \pmqty{A_0\ope^{(1)}_0 + \sum_{k=1}^m A_k \ope^{(k)}_{-\tau_k} \\[0.6em] \pqty{\opD^{(k)}}_{\!k=1}^{\!m} \\[0.5em] \pqty{\ope^{(k)}_{-\tau_k} - \ope^{(k+1)}_{-\tau_k}}_{\!k=1}^{\!m-1}} \Xi_{\,tN} + \pmqty{B \\ \vb{0} \\ \vb{0}} \vb{v}(t), \\[1em]
		\vb{z}_N(t) & = C\ope^{(1)}_0 \Xi_{\,tN}, \\[0.5em]
	\end{aligned}
\end{equation}
where $\opP^{(k)}$ indicates that $\opP$ is applied to segment $\xi^{(k)}_{tN}$
and $\pqty{\opP_k}_{k = 1}^m$ is the vertical concatenation of
$\{\opP_1, \dots, \opP_m\}$.

In following with \citet{ProvoostMichiels2025}, we can readily extend
Proposition~\ref{prop:rat-exp} to find that the transfer function
of~\eqref{eq:spl-discr} is given by
\[\textstyle
	G^{\rm spl}_N(s) = C\pqty{sE - A_0 - \sum_{k=1}^m A_k r^{(k)}_N(s, -\tau_k)}^{-1}B,
\]
where $r^{(k)}_N(s, \theta)$ is a rational function of $s$ and polynomial in
$\theta \in [-\tau_k, -\tau_{k-1}]$, such that
\[
	\begin{cases}
		r_N^{(1)}(s, 0) = 1,                                                                   \\
		\opD r_N^{(k)}(s, \,\cdot\,) = s\opT_{N-1} r_N^{(k)}(s, \,\cdot\,), & k = 1, \dots, m, \\
		r_N^{(k)}(s, -\tau_{k-1}) = r_N^{(k - 1)}(s, -\tau_{k-1}),          & k = 2, \dots, m.
	\end{cases}
\]

As shown in subsection~3.1 of the same article, we have
\begin{equation}
	\label{eq:spl-mag-one}
	\abs*{r^{(k)}_N(i\omega, -\tau_k)} = 1, \quad k=1,\dots,m,
\end{equation}
for all $\omega \in \RR$, if the underlying polynomials are symmetric on their
domains. Furthermore, each $r_N^{(k)}(s, -\tau_k)$ has the preceding rational
approximations, i.e.\ those with lower $k$, as factors. More specifically, we
have $r_N^{(1)} = \rho^{(1)}_N$ and
\begin{equation}
	\label{eq:spl-rN-recur}
	r_N^{(k)}(s, \theta) = r_N^{(k-1)}(s, -\tau_{k-1}) \rho^{(k)}_N\pqty{s, \theta}, \quad k=2,\dots,m,
\end{equation}
where $\rho^{(k)}_N(s, \theta)$ is a rational function of $s$ and polynomial in
$\theta \in [-\tau_k, -\tau_{k-1}]$, such that
\[
	\begin{cases}
		\rho^{(k)}_N(s, -\tau_{k-1}) = 1, \\
		\opD \rho^{(k)}_N(s, \,\cdot\,) = s\opT_{\phi_{k,N}} \rho^{(k)}_N(s, \,\cdot\,).
	\end{cases}
\]

These two properties simplify the arguments needed for the polynomial case in
section~\ref{sec:h2ddae} somewhat. First, note that the absence of feedthrough
under the condition of strong stability becomes rather straightforward
using~\eqref{eq:GN-no-E}, which, \emph{mutatis mutandis}, applies to
$G_N^{\rm spl}$ as well. Using~\eqref{eq:spl-mag-one} and
Theorem~\ref{thm:strng-stab-charac}, it is easily seen that $A_{N,22}(i \omega)$
is invertible for all $\omega \in \RR$. As $D_N^{-1}(i \omega) \to 0$ as
$\omega \to \infty$, the feedthrough term is given by
\[
	\lim_{\omega \to \infty} G_N^{\rm spl}(i \omega) = -\lim_{\omega \to \infty} C_2 A_{N,22}^{-1}(i\omega) B_2.
\]
Using the argument of Theorem~\ref{thm:no-feedthrough}, we readily have
$C_2 A_{N,22}^{-1}(i\omega) B_2 = \vb{0}$ for all $\omega$ and so too at infinity.

The theorems on the convergence of our $H^2$-norm approximation from
subsection~\ref{ssec:conv} also readily extend to splines. Better yet, most of
the required assumptions now trivially hold. The bound of
Assumption~\ref{assum:rN-bounded} is only required to hold in the delays, which
is trivially true when the underlying polynomials are symmetric, due
to~\eqref{eq:spl-mag-one}. From the same property and
Theorem~\ref{thm:strng-stab-charac}, Assumption~\ref{assum:A22-bounded} holds.

All that is left to show is thus convergence of the $r^{(k)}_N$, i.e.\
Assumption~\ref{assum:conv-exp}, and preservation of stability. As in
subsection~\ref{ssec:conv}, we can do so when we use splines based on Legendre
polynomials. Using the argument for the convergence of $r_N$, we can show that
$\rho^{(k)}_N(s, \theta) \to e^{(\theta - \tau_{k-1}) s}$ as $N \to \infty$.
Convergence of $r^{(k)}_N$ then follows from~\eqref{eq:spl-rN-recur} and
$e^a e^b = e^{a + b}$. For preservation of stability, note that for this choice
of basis, equation~\eqref{eq:spl-rN-recur} implies that
$r^{(k)}_N(i\omega, -\tau_k)$ is the product of $k$ diagonal Padé approximants
of various decaying exponential functions. The argument and conclusion of
Theorem~\ref{thm:Legendre-single-delay-stable} thus readily extend to this case,
giving the following result.
\begin{theorem}\label{thm:spl-Legendre-stable}
	For a strongly exponentially stable DDAE system~\eqref{eq:ddae}, the
	discretization~\eqref{eq:spl-discr} preserves stability for $N$ sufficiently
	large, when using a spline based on a Legendre basis.
\end{theorem}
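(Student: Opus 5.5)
I will mirror the proof of Theorem~\ref{thm:Legendre-single-delay-stable}, replacing the single Padé approximant by the spline quantities $r^{(k)}_N(s,-\tau_k)$.

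The first step is to show that $\abs{r^{(k)}_N(s,-\tau_k)} \le 1$ on the closed right half-plane $\CC_{\ge 0}$ for every $k$. For a Legendre basis on each segment, the argument of Theorem~4.2 of \citet{ProvoostMichiels2024} applies after shifting and scaling to $[-\tau_k,-\tau_{k-1}]$. It gives that $\rho^{(k)}_N(s,-\tau_k)$ is the $N$th diagonal Padé approximant of $s\mapsto e^{-(\tau_k-\tau_{k-1})s}$. By \citet[Lemma~8]{Birkhoff1965}, each such factor has modulus at most one on $\CC_{\ge 0}$. The recursion~\eqref{eq:spl-rN-recur} writes $r^{(k)}_N(s,-\tau_k)=\prod_{j\le k}\rho^{(j)}_N(s,-\tau_j)$, so the bound transfers to every product.

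Next I will use the spline analogue of~\eqref{eq:GN-no-E}, with $A_{N,ij}(s)=A_{0,ij}+\sum_{k\ge1}A_{k,ij}r^{(k)}_N(s,-\tau_k)$. Setting $z_k=r^{(k)}_N(s,-\tau_k)\in\bar\DD$, Lemma~\ref{lem:stronger-strn-stab-cond} gives $\rho(\sum_{k\ge1}A_{k,22}z_k)<1$. Hence $A_{N,22}(s)=-I_\nu+\sum_{k\ge1}A_{k,22}z_k$ is invertible on $\CC_{\ge0}$. I also want $\|A_{N,22}^{-1}(s)\|_2$ uniformly bounded there, in both $s$ and $N$. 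For this I will use that $\bar\DD^m$ is compact and that $\vb z\mapsto(-I_\nu+\sum_k A_{k,22}z_k)^{-1}$ is continuous on it. The uniform bound then holds because all the $N$- and $s$-dependence enters only through $\vb z\in\bar\DD^m$. The same compactness bounds $A_{N,11}$, $A_{N,12}$ and $A_{N,21}$ uniformly on $\CC_{\ge0}$.

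The third step localises the unstable roots. A characteristic root $\lambda\in\CC_{\ge0}$ of the discretization must make the block matrix singular. Since $A_{N,22}(\lambda)$ is invertible, this forces $\det D_N(\lambda)=0$. Consequently $\abs\lambda\le\|A_{N,11}+A_{N,12}A_{N,22}^{-1}A_{N,21}\|_2\le M$, with $M$ independent of $N$. I need to make sure the characteristic roots of the descriptor system~\eqref{eq:spl-discr} coincide with the singularities of this matrix function. This follows from the Laplace-transform elimination of the tail coefficients used for $G^{\rm spl}_N$, provided $\opD^{(k)}-s\opT^{(k)}_{N-1}$ together with the continuity and evaluation conditions is uniquely solvable. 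That solvability is exactly the well-definedness of $r^{(k)}_N(s,\cdot)$, which holds at non-poles; the poles of the Padé factors lie in the open left half-plane.

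Finally, I confine possible roots to the compact hemidisc $\Omega=\{\abs s\le M,\ \Re s\ge0\}$. The original system has no roots there, since $\alpha(\vb*\tau)<0$, so $\det(sE-\sum_k A_ke^{-\tau_ks})$ is bounded away from zero on $\Omega$. The argument after~\eqref{eq:spl-rN-recur} gives $r^{(k)}_N(s,-\tau_k)\to e^{-\tau_k s}$, and it should hold uniformly on compact sets as in the proof of Theorem~\ref{thm:GN-conv}. Then the characteristic matrix of~\eqref{eq:spl-discr} converges uniformly on $\Omega$, and so its determinant eventually has no zeros in $\Omega$.

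I expect the main obstacle to be the uniform-on-compacts convergence of $\rho^{(k)}_N$, the Assumption~\ref{assum:conv-exp} analogue for the segment Legendre bases. I would handle it directly: diagonal Padé approximants of the exponential converge uniformly on compact subsets of $\CC$, which is a classical result of Padé/Perron. This sidesteps the Lebesgue-constant route.
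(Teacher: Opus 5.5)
Your proposal is correct and follows essentially the same route as the paper. The paper observes that each $r^{(k)}_N(s,-\tau_k)$ is a product of diagonal Padé approximants bounded by one on $\CC_{\ge 0}$, then reruns the proof of Theorem~\ref{thm:Legendre-single-delay-stable}: bounded $A_{N,22}^{-1}$ via Lemma~\ref{lem:stronger-strn-stab-cond}, roots confined to an $N$-independent hemidisc, and convergence on that compact set. Where you differ is in using the classical locally uniform convergence of diagonal Padé approximants of the exponential in place of the paper's Lebesgue-constant route behind Assumption~\ref{assum:conv-exp}; this is a cleaner justification, and your compactness bound and tail-solvability check fill in details the paper leaves implicit.
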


From the preceding discussion we thus have the following generalization of
Corollary~\ref{cor:Legendre-H2-convergence}.
\begin{corollary}\label{cor:spline-Legendre-H2-convergence}
	For a strongly exponentially stable DDAE system~\eqref{eq:ddae} with finite
	strong $H^2$-norm, the method described in section~\ref{sec:h2ddae} converges,
	i.e.\ $\norm*{G^{\rm spl}_N}_{H^2} \to \norm{G}_{H^2}$ for $N \to \infty$, when using
	discretization~\eqref{eq:spl-discr} with a spline based on a Legendre basis.
\end{corollary}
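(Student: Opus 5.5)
The corollary follows by running the proof of Theorem~\ref{thm:H2-convergence} verbatim for $G^{\rm spl}_N$. That proof used only four ingredients: the Schur-complement representation~\eqref{eq:GN-no-E}; the vanishing of $C_2A_{N,22}^{-1}(i\omega)B_2$; the bounds of Assumptions~\ref{assum:rN-bounded} and~\ref{assum:A22-bounded}, which control the tails; and local uniform convergence via Theorem~\ref{thm:GN-conv} together with preservation of stability. The plan is to check each ingredient for the spline discretization. In the representation, every occurrence of $r_N(s,-\tau_k)$ is replaced by $r^{(k)}_N(s,-\tau_k)$ for $k\ge1$, and $A_0$ keeps its exact coefficient $1$.

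\textbf{Step 1: the bounds.} By~\eqref{eq:spl-mag-one}, Legendre polynomials are symmetric on each segment, so $\abs{r^{(k)}_N(i\omega,-\tau_k)}=1$ for all $k$, $\omega$ and $N$. This gives Assumption~\ref{assum:rN-bounded} with $f\equiv 1$. For Assumption~\ref{assum:A22-bounded}, write
\[
A_{N,22}(i\omega) = -I_\nu + \textstyle\sum_{k\ge1} A_{k,22}\, z_k, \qquad \abs{z_k}=1 .
\]
The set $\{\sum_{k\ge1} A_{k,22}z_k : \abs{z_k}=1\}$ is compact. By Theorem~\ref{thm:strng-stab-charac}, $\rho<1$ on this set, so $\det A_{N,22}$ is bounded away from zero on it. Continuity and compactness then give a uniform bound $M$ on $\norm{A_{N,22}^{-1}(i\omega)}_F$, independent of $N$ and $\omega$.

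\textbf{Step 2: no feedthrough.} Expanding $A_{N,22}^{-1}$ as in Theorem~\ref{thm:no-feedthrough}, via Cayley--Hamilton and Proposition~\ref{prop:sum-power}, and applying Theorem~\ref{thm:strng-h2-charac}, we get $C_2A_{N,22}^{-1}(i\omega)B_2=\vb{0}$. The tail estimate then follows exactly as in the proof of Theorem~\ref{thm:H2-convergence}. Since the $r^{(k)}_N$ are bounded on $i\RR$, $D_N^{-1}(i\omega)$ decays like $\abs{\omega}^{-1}$ uniformly in $N$. Every remaining term of~\eqref{eq:GN-no-E} contains at least one factor $D_N^{-1}$ multiplied by bounded matrices, so $\norm{G^{\rm spl}_N(i\omega)}_F = O(\abs{\omega}^{-1})$ uniformly in $N$.

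\textbf{Step 3: local convergence.} We need Assumption~\ref{assum:conv-exp} for the spline. On each segment, $\rho^{(k)}_N(s,\cdot)$ is the single-interval Legendre tau approximant of the shifted exponential $\theta\mapsto e^{(\theta+\tau_{k-1})s}$. The Lebesgue-constant argument (Proposition~5.1 of \citet{Breda2015} with the $O(\sqrt N)$ Legendre constants) gives pointwise convergence in $s$, and hence locally uniform convergence. The product formula~\eqref{eq:spl-rN-recur} then gives $r^{(k)}_N(s,-\tau_k)\to e^{-\tau_k s}$ uniformly on compact sets. Lemma~\ref{lem:conv-inv}, applied as in Theorem~\ref{thm:GN-conv}, yields $G^{\rm spl}_N\to G$ uniformly on compact subsets of $i\RR$, once no poles lie on the imaginary axis.

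\textbf{Step 4: conclusion.} That last condition is supplied by Theorem~\ref{thm:spl-Legendre-stable}: for $N$ large the spline discretization is exponentially stable. In particular $\norm{G^{\rm spl}_N}_{H^2}$ is finite and there are no poles on $i\RR$. Splitting $\int e_N$ into a compact part and tails, exactly as in the proof of Theorem~\ref{thm:H2-convergence}, finishes the argument.

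The main obstacle is Step 3, specifically the segment-wise convergence. The tau truncation $\opT_{N-1}$ acts per segment, but the interior segments are coupled to their neighbours only through the continuity conditions. We must therefore check that the factorisation~\eqref{eq:spl-rN-recur} really reduces each segment to an independent single-interval problem with initial value $1$; this is what lets the Legendre result of \citet{Breda2015} be applied segment by segment. Stability is the other delicate point, but here it is simply taken from Theorem~\ref{thm:spl-Legendre-stable}.
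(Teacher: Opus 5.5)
Your proposal is correct and follows essentially the paper's route: the paper likewise obtains Assumptions~\ref{assum:rN-bounded} and~\ref{assum:A22-bounded} from~\eqref{eq:spl-mag-one} and Theorem~\ref{thm:strng-stab-charac}, gets Assumption~\ref{assum:conv-exp} segment-wise from the single-interval Legendre argument combined with the factorisation~\eqref{eq:spl-rN-recur}, takes stability from Theorem~\ref{thm:spl-Legendre-stable}, and then reruns the proof of Theorem~\ref{thm:H2-convergence}. The factorisation already poses each $\rho^{(k)}_N$ as an independent tau problem normalised to $1$ at $-\tau_{k-1}$, so your ``main obstacle'' is settled by that formula; your compactness bound on $A_{N,22}^{-1}(i\omega)$ and your shift $e^{(\theta+\tau_{k-1})s}$ are, if anything, more careful than the paper, whose $e^{(\theta-\tau_{k-1})s}$ is a sign slip given $\rho^{(k)}_N(s,-\tau_{k-1})=1$.
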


\begin{figure}
	\centering
	\includegraphics[width=.7\textwidth]{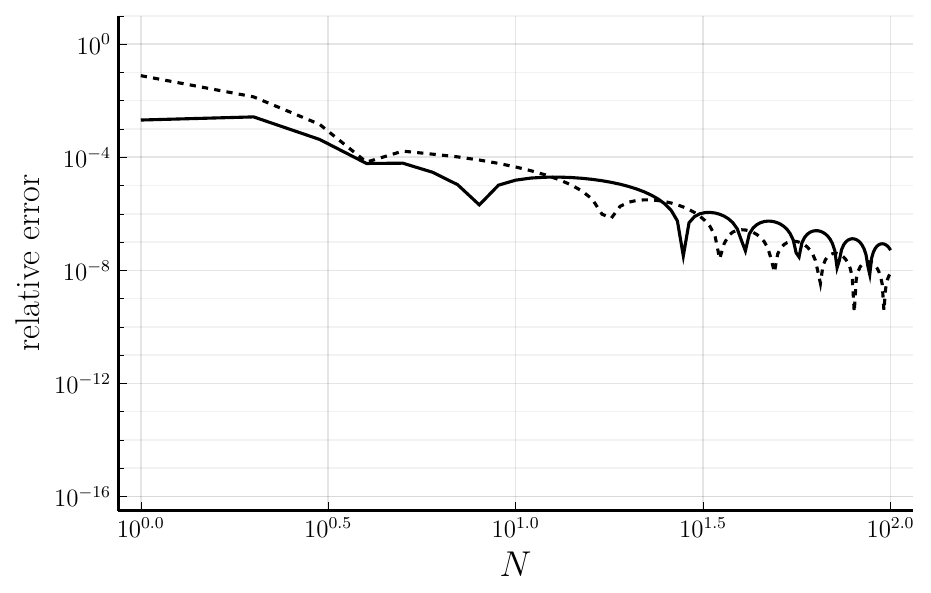}
	\caption{\label{fig:spline-conv-mixed} The same experiment as on
		figure~\ref{fig:conv-mixed} but using a spline discretization. The dashed
		curve has a neutral term on the maximal delay, the solid curve on the
		interior delay.}
\end{figure}

Finally, \citet{ProvoostMichiels2025} demonstrated that the convergence rate for
a RDDE with multiple delays can be accelerated from third order algebraic
convergence to about firth order when using these types of approximations. As
geometric convergence in the single delay case, which is presumed to allow the
acceleration, appears to be maintained in the presented extension to DDAEs (see
subsection~\ref{ssec:conv}), we would expect similar improvements here. Indeed,
comparing figure~\ref{fig:spline-conv-mixed} to figure~\ref{fig:conv-mixed} we
see that a similar improvement of about two orders seems to hold for DDAEs. For
the worst case, NDDEs with neutral terms on internal delays, this results in an
improvement from first order to about third order convergence.

\section{Conclusions}\label{sec:concl}
We presented a method to approximate the $H^2$-norm of a DDAE
system~\eqref{eq:ddae}, using a Lanczos tau discretization as proxy. We showed
that, under the assumption of a finite strong $H^2$-norm, this approach is sound
(Theorem~\ref{thm:no-feedthrough}). Furthermore, we were able to prove
convergence under reasonable assumptions, which, to our knowledge, is novel even
in the more restricted case of a retarded system
(Theorem~\ref{thm:H2-convergence}). For the specific case of a Legendre based
discretization of a single delay system, we were able to formally prove
preservation of stability (Theorem~\ref{thm:Legendre-single-delay-stable}) and
the other requirements (Corollary~\ref{cor:Legendre-H2-convergence}). Underlying
the convergence of the $H^2$-norm is the convergence of the transfer function
established in Theorem~\ref{thm:GN-conv}.

In section~\ref{sec:control} we derived analytical expressions for the gradient
of the square of the $H^2$-norm of the discretized system, with respect to the
system matrices and the delays. These expressions, together with the $H^2$-norm,
can be evaluated for slightly more than double the computational cost of
computing the $H^2$-norm alone. This allows us to efficiently synthesize both
$H^2$-optimal fixed-order controllers and stable approximate models. We
illustrated this in subsection~\ref{ssec:examples} by reproducing three known
examples for retarded type system and two novel examples involving neutral
terms.

Finally, we discussed the use of a spline instead of a single polynomial for the
discretization. We saw that this tends to simplify the theory and, when using a
spline based on a Legendre basis, allows us to prove preservation of stability
(Theorem~\ref{thm:spl-Legendre-stable}) and convergence
(Corollary~\ref{cor:spline-Legendre-H2-convergence}) for DDAEs with multiple
delays.

Although we did not treat it explicitly here---as we do not know of any
analogues to Theorems~\ref{thm:strng-stab-charac}
and~\ref{thm:strng-h2-charac}---the presented method can be straightforwardly
extended to more general semi-explicit functional differential algebraic
equations, by replacing $\sum^m_{k=0} A_k [\ope_{-\tau_k}]$ by the discretized
functional in~\eqref{eq:ddaediscr}.

\begin{acknowledgements}
	The authors wish to thank Nick Dewaele for providing the proof of
	Lemma~\ref{lem:conv-inv}.
\end{acknowledgements}

\appendix
\section{Intermediate steps to compute the derivatives}\label{apx:deriv}
To ease legibility, let us write $\gamma^2 = \norm{G_N}^2_{H^2}$. Starting from
the relations
\[\textstyle
	\dd{\gamma^2} = \tr(Q \pdv{\ell_P}{X} \dd{X}) \qq{and}
	\dd{\gamma^2} = \tr(P \pdv{\ell_Q}{X} \dd{X}),
\]
obtained in subsection~\ref{ssec:deriv}, we straightforwardly get the expression
\begin{align*}
	\dd{\gamma^2} & = \tr(Q (\dd{\odeA} P \odeE^T + \odeE P \dd{\odeA^T})) + \tr(Q (\dd{\odeB} \odeB^T + \odeB\dd{\odeB^T})) \\
	              & \quad + \tr(P (\dd{\odeC^T} \odeC + \odeC^T\dd{\odeC}))                                                  \\
	              & = 2\tr(P\odeE^T Q \dd{\odeA} + \odeB^T Q \dd{\odeB} + P \odeC^T \dd{\odeC}).
\end{align*}

We continue by writing the differentials in terms of the original system,
effectively retracing our steps of subsection~\ref{ssec:ddaediscr}. From the
application of the Schur complement formula we have
\begin{align*}
	\dd{\odeA} & = \dd{\opA_{11}} - \dd{\opA_{12}}\opA_{22}^{-1}\opA_{21} + \opA_{12}\opA_{22}^{-1}\dd{\opA_{22}}\opA_{22}^{-1}\opA_{21} - \opA_{12}\opA_{22}^{-1}\dd{\opA_{21}},    \\
	\dd{\odeB} & = \dd{\opB_1} - \dd{\opA_{12}}\opA_{22}^{-1}\opB_2 + \opA_{12}\opA_{22}^{-1}\dd{\opA_{22}}\opA_{22}^{-1}\opB_2 - \opA_{12}\opA_{22}^{-1}\dd{\opB_2},\quad\text{and} \\
	\dd{\odeC} & = \dd{\opC_1} - \dd{\opC_2}\opA_{22}^{-1}\opA_{21} + \opC_2\opA_{22}^{-1}\dd{\opA_{22}}\opA_{22}^{-1}\opA_{21} - \opC_2\opA_{22}^{-1}\dd{\opA_{21}}.
\end{align*}
Differentiating through the construction of the standard form gives
\begin{gather*}
	\spmqty{\dd{\opA_{11}} & \dd{\opA_{12}} \\ \dd{\opA_{21}} & \dd{\opA_{22}}} = \spmqty{U^\perp \\ U} \dd{\opA_N} \spmqty{V^\perp & V},\quad
	\spmqty{\dd{\opB_1} \\ \dd{\opB_2}} = \spmqty{U^\perp \\ U} \dd{\opB_N},\quad\text{and}\\
	\spmqty{\dd{\opC_1} & \dd{\opC_2}} = \dd{\opC_N} \spmqty{V^\perp & V}.
\end{gather*}
Finally, the discretization~\eqref{eq:ddaediscr} brings us back to the system's
parameters
\begin{gather*}
	\dd{\opA_N} =  \spmqty{I_n    \\ \vb{0}} \textstyle\sum^m_{k=0} \pqty{\dd{A_k} [\ope_{-\tau_k}] + A_k \dd{[\ope_{-\tau_k}]}} + \spmqty{\vb{0} \\ I_{nN}} \dd{[\opD]},\\
	\dd{\opB_N} = \spmqty{I_n    \\ \vb{0}} \dd{B}, \qq{and} \dd{\opC_N} = \dd{C} [\ope_0],
\end{gather*}
where
$\dd{[\ope_{-\tau_k}]} = -[\ope_{-\tau_k}\opD]\pqty{\dd{\tau_k} -
		\frac{\tau_k}{\tau_m}\dd{\tau_m}}$ and
$\dd{[\opD]} = -\frac{1}{\tau_m}[\opD] \dd{\tau_m}$.

From this we can then derive $\dd{\gamma^2}$ as a function of $\dd{A_k}$,
$\dd{\tau_k}$ for $k < m$, $\dd{\tau_m}$, $\dd{B}$, and $\dd{C}$, respectively
\begingroup\allowdisplaybreaks
\begin{align*}
	\dd{\gamma^2} ={}  & \begin{aligned}[t] 2\operatorname{tr}\Big([\ope_{-\tau_k}] \big( & (V^\perp - V\opA_{22}^{-1}\opA_{21})P\odeE^T Q(U^\perp - \opA_{12}\opA_{22}^{-1}U)    \\*
                                                                 & - V\opA_{22}^{-1}\opB_2\odeB^TQ(U^\perp - \opA_{12}\opA_{22}^{-1}U)                   \\*
                                                                 & - (V^\perp - V\opA_{22}^{-1}\opA_{21})P\odeC^T \opC_2\opA_{22}^{-1}U\big) \spmqty{I_n \\ \vb{0}} \dd{A_k}\Big),
	                     \end{aligned}                                      \\
	\dd{\gamma^2} ={}  & \begin{aligned}[t] -2\operatorname{tr}\Big(\big( & (V^\perp - V\opA_{22}^{-1}\opA_{21})P\odeE^T Q(U^\perp - \opA_{12}\opA_{22}^{-1}U)    \\*
                                                 & - V\opA_{22}^{-1}\opB_2\odeB^TQ(U^\perp - \opA_{12}\opA_{22}^{-1}U)                   \\*
                                                 & - (V^\perp - V\opA_{22}^{-1}\opA_{21})P\odeC^T \opC_2\opA_{22}^{-1}U\big) \spmqty{I_n \\ \vb{0}} A_k [\ope_{-\tau_k}\opD]\Big) \dd{\tau_k},\quad k < m,
	                     \end{aligned}              \\
	\dd{\gamma^2} = {} & \begin{aligned}[t] -2\operatorname{tr}\Big(\big( & (V^\perp - V\opA_{22}^{-1}\opA_{21})P\odeE^T Q(U^\perp - \opA_{12}\opA_{22}^{-1}U)       \\*
                                                 & - V\opA_{22}^{-1}\opB_2\odeB^TQ(U^\perp - \opA_{12}\opA_{22}^{-1}U)                      \\*
                                                 & - (V^\perp - V\opA_{22}^{-1}\opA_{21})P\odeC^T \opC_2\opA_{22}^{-1}U\big) \spmqty{\vb{0} \\ I_{nN}} \tfrac{1}{\tau_m}[\opD]\Big) \dd{\tau_m}
	                     \end{aligned} \\*
	                   & \quad- \sum_{k =0}^{m-1} \frac{\tau_k}{\tau_m} \dv{\gamma^2}{\tau_k} \dd{\tau_m},                                                                                                                                        \\
	\dd{\gamma^2} ={}  & 2\tr(\odeB^TQ (U^\perp - \opA_{12}\opA_{22}^{-1}U) \spmqty{I_n                                                                                                                                                           \\ \vb{0}}\dd{B}),\quad\text{and}                                                \\
	\dd{\gamma^2} ={}  & 2\tr([\ope_0](V^\perp - V\opA_{22}^{-1}\opA_{21})P\odeC^T\dd{C}).
\end{align*}
\endgroup
Using the abbreviations $Q_U = Q (U^\perp - \opA_{12}\opA_{22}^{-1}U)$,
$P_V = (V^\perp - V\opA_{22}^{-1}\opA_{21}) P$,
$W_{\!\opA} = V\opA_{22}^{-1}U$, and $\opD_{\mathcal{S}} = P_V\odeE^T Q_U - W_{\!\opA} \opB_N\odeB^TQ_U
	- P_V \odeC^T \opC_N W_{\!\opA}$ we get the compacter
\begin{gather*}
	\dd{\gamma^2}  = 2\tr([\ope_{-\tau_k}] \opD_{\mathcal{S}} \spmqty{I_n                                                   \\ \vb{0}} \dd{A_k}),                                                               \\
	\dd{\gamma^2}  = -2\tr(\opD_{\mathcal{S}} \spmqty{I_n                                                   \\ \vb{0}} A_k [\ope_{-\tau_k}\opD]) \dd{\tau_k}, \quad k < m,                                                               \\
	\dd{\gamma^2}  = -2\tr(\opD_{\mathcal{S}} \spmqty{\vb{0} \\ I_{nN}} \tfrac{1}{\tau_m} [\opD])\dd{\tau_m} - \sum_{k =0}^{m-1} \frac{\tau_k}{\tau_m} \dv{\gamma^2}{\tau_k}\dd{\tau_m},                                                               \\
	\dd{\gamma^2}  = 2\tr(\odeB^TQ_U\spmqty{I_n                                                     \\ \vb{0}}\dd{B}),\qq{and}
	\dd{\gamma^2}  = 2\tr([\ope_0]P_V\odeC^T\dd{C}).
\end{gather*}
From the fact that $\dd{f} = \tr(Y^T \dd{X})$ implies $\dv{f}{X} = Y$, we find
the final results of subsection~\ref{ssec:deriv}.

\section{Full system matrices of the numerical examples}\label{apx:ddae-forms}
We detail the DDAE formulations of the examples in subsections~\ref{ssec:conv}
and~\ref{ssec:examples} as passed to the $H^2$-norm optimization routine. This
does not include example~\ref{ex:rdde-2} as there the system can be used as
written in the main text.

\emph{Subsection~\ref{ssec:conv}.} Using slack variables $x_3(t) = \dot{x}_1(t)$
and $x_4(t) = \dot{x}_2(t)$, we get
\begin{align*}
	\spmqty{I_2 & \vb{0}                              \\ I_2 & \vb{0}} \dot{\vb{x}}(t) & = \begin{aligned}[t]
		\spmqty{\spmqty{-5 & 1                                                           \\ 3 & 8} & \vb{0} \\ \vb{0} & I_2} \vb{x}(t) &{}+{} \spmqty{\delta_{r_1}\spmqty{-2 & 0 \\ -2 & 1} & \delta_{n_1}\spmqty{-\sfrac{1}{5} & \sfrac{1}{10} \\ 0 & -\sfrac{1}{10}} \\ \vb{0} & \vb{0}} \vb{x}(t - 1) \\
		                   & {}-{} \spmqty{\delta_{r_2}I_2 & \frac{1}{10}\delta_{n_2}I_2 \\ \vb{0} & \vb{0}} \vb{x}(t - 1.9) + \spmqty{1 \\ 1 \\ 0 \\ 0} v(t),
	\end{aligned}                \\
	z(t)        & = \spmqty{1 & 1 & 0 & 0} \vb{x}(t).
\end{align*}

\emph{Example~\ref{ex:rdde-1}.} Introducing the slack variable $x_4$, which
corresponds to the control signal in the open-loop system, allows us to keep the
parameters as separate matrix elements, yielding
\begin{align*}
	\spmqty{
	1                     & 0     & 0      & 0    \\
	0                     & 1     & 0      & 0    \\
	0                     & 0     & 1      & 0    \\
	0                     & 0     & 0      & 0
	} \dot{\vb{x}}(t) ={} &
	\spmqty{
	-0.08                 & -0.03 & 0.2    & 0    \\
	0.2                   & -0.04 & -0.005 & 0    \\
	-0.06                 & -0.2  & -0.07  & 0    \\
	p_1                   & p_2   & p_3    & -1
	} \vb{x}(t)                                   \\
	                      & +
	\spmqty{
	0                     & 0     & 0      & -0.1 \\
	0                     & 0     & 0      & -0.2 \\
	0                     & 0     & 0      & 0.1  \\
	0                     & 0     & 0      & 0
	} \vb{x}(t - 5) +
	\spmqty{
	1                     & 0     & 0             \\
	0                     & 1     & 0             \\
	0                     & 0     & 1             \\
	0                     & 0     & 0
	} \vb{v}(t),                                  \\
	\vb{z}(t) ={}         &
	\spmqty{
	1                     & 0     & 0      & 0    \\
	0                     & 1     & 0      & 0    \\
	0                     & 0     & 1      & 0
	} \vb{x}(t).
\end{align*}

\emph{Example~\ref{ex:rdde-2}.} As with the previous example, we can keep the
parameters to optimize as separate matrix elements by introducing the control
signal as the slack variable $x_3$, giving
\begin{align*}
	\spmqty{
	1                 & 0           & 0                  \\
	0                 & 1           & 0                  \\
	0                 & 0           & 0
	} \dot{\vb{x}}(t) & =
	\spmqty{
	0                 & 1           & 0                  \\
	-\nu^2            & -2\delta\nu & b                  \\
	-k_p              & 0           & -1
	} \vb{x}(t) +
	\spmqty{
	0                 & 0           & 0                  \\
	0                 & 0           & 0                  \\
	k_r               & 0           & 0
	} \vb{x}(t - \tau) +
	\spmqty{
	0                                                    \\
	b                                                    \\
		0
	} v(t),                                              \\
	z(t)              & = \spmqty{1 & 0  & 0} \vb{x}(t).
\end{align*}

\emph{Example~\ref{ex:ndde-1}.} Here we add the algebraic equation
$x_2(t) = \dot{x}_1(t)$ to capture the neutral terms. The control signal is
added as $x_3$. This then gives us
\begin{align*}
	\spmqty{
	1                 & 0           & 0                  \\
	1                 & 0           & 0                  \\
	0                 & 0           & 0
	} \dot{\vb{x}}(t) & =
	\spmqty{
	-1                & 0           & 1                  \\
	0                 & 1           & 0                  \\
	0                 & 0           & -1
	} \vb{x}(t) +
	\spmqty{
	1                 & 0           & 0                  \\
	0                 & 0           & 0                  \\
	p_2               & p_1         & 0
	} \vb{x}(t - 1) +
	\spmqty{
	0                                                    \\
	0                                                    \\
		1
	} v(t),                                              \\
	z(t)              & = \spmqty{1 & 0  & 0} \vb{x}(t).
\end{align*}

\emph{Example~\ref{ex:ndde-2}.} We get the delayed derivative and acceleration
from the algebraic equations $x_2(t) = \dot{x}_1(t)$ and
$x_3(t) = \dot{x}_2(t) = \ddot{x}_1(t)$. The slack variables
$x_4(t) = \ddot{x}_1(t - \tau_1) + v_1(t)$ and
$x_5(t) = \dot{x}_1(t - \tau_2) + v_2(t)$ are the measurements. Together this
gives
\begin{align*}
	\spmqty{
	0                     & 1         & 0 & 0   & 0                   \\
	1                     & 0         & 0 & 0   & 0                   \\
	0                     & 1         & 0 & 0   & 0                   \\
	0                     & 0         & 0 & 0   & 0                   \\
	0                     & 0         & 0 & 0   & 0
	} \dot{\vb{x}}(t) ={} &
	\spmqty{
	-1                    & -2\xi     & 0 & p_1 & p_2                 \\
	0                     & 1         & 0 & 0   & 0                   \\
	0                     & 0         & 1 & 0   & 0                   \\
	0                     & 0         & 0 & -1  & 0                   \\
	0                     & 0         & 0 & 0   & -1
	} \vb{x}(t) +
	\spmqty{
	0                     & 0         & 0 & 0   & 0                   \\
	0                     & 0         & 0 & 0   & 0                   \\
	0                     & 0         & 0 & 0   & 0                   \\
	0                     & 0         & 1 & 0   & 0                   \\
	0                     & 0         & 0 & 0   & 0
	} \vb{x}(t - \tau_1)                                              \\
	                      & +
	\spmqty{
	0                     & 0         & 0 & 0   & 0                   \\
	0                     & 0         & 0 & 0   & 0                   \\
	0                     & 0         & 0 & 0   & 0                   \\
	0                     & 0         & 0 & 0   & 0                   \\
	0                     & 1         & 0 & 0   & 0
	} \vb{x}(t - \tau_2) +
	\spmqty{
	0                     & 0                                         \\
	0                     & 0                                         \\
	0                     & 0                                         \\
	1                     & 0                                         \\
	0                     & 1
	} \vb{v}(t),                                                      \\
	z(t) ={}              & \spmqty{1 & 0 & 0   & 0   & 0} \vb{x}(t).
\end{align*}

\sloppy
\printbibliography

\end{document}